\pgfplotsset{compat=1.18}
\renewcommand{\phi}{\varphi}
\renewcommand{\theta}{\vartheta}
\renewcommand{\epsilon}{\varepsilon}
\newcommand{\field}[1]{\mathbb{#1}} 
\newcommand{\R}{\field{R}}
\newcommand{\B}{\field{B}}
\newcommand{\A}{\field{A}}
\newcommand{\Id}{\field{I}}
\newcommand{\V}{\field{V}}
\newcommand{\M}{\field{M}}
\newcommand{\EE}{\mathcal{E}}
\newcommand{\VV}{\mathcal{V}}
\DeclareMathOperator{\diag}{diag}
\DeclareMathOperator{\tr}{tr}
\newcommand{\sh}[1]{{#1}_\mathrm{sh}}
\newcommand{\Ash}{\sh\A}
\newcommand{\abs}[1]{\left\lvert#1\right\rvert}                  
\newcommand{\norm}[1]{\left\lVert#1\right\rVert}    
\newcommand{\scal}[2]{\left\langle #1,#2\right\rangle}
\newcommand*{\dd}{\mathop{}\!\mathrm{d}}
\newtheorem{theorem}{Theorem}
\newtheorem{lemma}[theorem]{Lemma}
\newtheorem{prop}[theorem]{Proposition}
\newtheorem{corol}[theorem]{Corollary}
\theoremstyle{definition}
\newtheorem{defin}[theorem]{Definition}
\newtheorem{remark}[theorem]{Remark}
\newtheorem{example}[]{Example}
\numberwithin{equation}{section}
\numberwithin{theorem}{section}
\title{A Massera-type Theorem on relative-periodic solutions for a second-order model of rectilinear locomotion}
\author{Paolo Gidoni\footnote{Dipartimento Politecnico di Ingegneria e Architettura, Università degli Studi di Udine, Via delle Scienze 206, 33100 Udine, Italy. \Letter \;\texttt{paolo.gidoni@uniud.it}} ~and Alessandro Margheri\footnote{Centro de Matemática, Aplicações Fundamentais e Investigação Operacional, Departamento de Matemática, Faculdade de Ciências, Universidade de Lisboa, Campo Grande, Edifício C6, piso 2, 1749-016 Lisboa, Portugal. \Letter \;\texttt{amargheri@fc.ul.pt}}}
\date{}
\begin{document}

\maketitle

\begin{abstract}
We study the existence of a global periodic attractor for the reduced dynamics of a discrete toy model for rectilinear crawling locomotion, corresponding to a limit cycle in the shape and velocity variables. The body of the crawler consists of a chain of point masses, joined by active elastic links and subject to smooth friction forces, so that the dynamics is described by a system of second order differential equations. Our main result is of Massera-type, namely we show that the existence of a bounded solution implies the existence of the global periodic attractor for the reduced dynamics. In establishing  this result, a contractive property of the dynamics of our model plays a central role. We then prove  sufficient conditions  on the friction forces for  the existence of a bounded solution, and therefore   of the attractor. We also provide  an example showing that,  if we consider more general friction forces, such as  smooth approximations of dry friction,   bounded solutions may not exist.
\end{abstract}

\noindent\textbf{Mathematics Subject Classification (2020).} Primary: 70K42; Secondary: 34D45, 37C60.

\noindent\textbf{Keywords.} Massera-type result, crawling locomotion,  relative-periodic solution, gait, dissipative system,  limit cycle 
\section{Introduction}

 Many mechanical systems  are modeled by time-periodic ordinary differential equations in $\R^d$: consider, for instance, systems subject to a periodic forcing.  Compared to autonomous systems, for which a qualitative analysis naturally starts from characterizing equilibria, in time-periodic systems a first fundamental step is often the more challenging search for periodic solutions and their stability properties.
 
 A  relevant contribution on this topic is due to Massera \cite{Massera} in 1950.  He proved that,  in the scalar case    $(d=1)$, if there exists   a solution of  a $T$-periodic equation which is well defined  and bounded in an interval of the form $[t_0, +\infty)$,    then such solution converges to a $T$-periodic   solution. 
In the same paper Massera proved that this result is valid also in the planar case $d=2,$ under the mild (but sharp) additional  assumption that all  solutions are globally defined in future. For a nice recent presentation of Massera's theorems see \cite{Or}.   An analogous of Massera's theorem was obtained for  a special class of time periodic ordinary differential equations in  $\R^d$  for arbitrary $d$  in \cite{Smith}.  This class   satisfies  an additional  assumption,    expressed in terms of  a suitable  Lyapunov-like function.

Results that infer the existence of a periodic (or quasiperiodic)  solution given the existence of a bounded one have been often referred to as  \emph{Massera-type} theorems. These theorems have been proposed in various settings, including functional   functional differential equations \cite{ChowHale,ShNa},  generalized differential equation \cite{FlMeSl},  PDEs    \cite{Capistrano2023,LCLL99} and differential inclusions \cite{GudMak}.

In this paper,   we obtain a Massera-type result for the dynamics arising from a toy-model of crawling locomotion, namely Theorem \ref{th:Massera_red}.
The model we consider,  is illustrated in Figure~\ref{fig:crawler} and consists of a chain of $n$ blocks on a line, with adjacent blocks being joined by elastic active (i.e~changing rest length) links and friction forces opposing the motion of each block. 
The time-periodicity in the governing equations is associated with the \emph{gait} adopted, i.e.~the periodic pattern of ``actions'' performed by the crawler to move. More precisely, in our model, the gait consists of a periodic change in rest lengths of the links and possibly in the friction forces, which indirectly leads to a change in the shape of the crawler (which we will show to be only \emph{asymptotically} periodic). Equivalently, each active link can be thought as an actuator with controlled periodic shape, connected to the blocks by an elastic element, acting analogously to a muscle-tendon system.

Simpler models without elasticity, so that each gait coincides with a prescribed periodic shape change of the crawler has also been considered \cite{BPZZ16,FigKny,GiMaRe}. We notice, however, that crawling animals, such as earthworms or inchworms, are generally soft bodied. Moreover, in recent years, the field of soft robotics has shown the most interest in these locomotion strategies (see \cite{CaPiLa} and the introduction in \cite{Gid18} for a quick discussion), where a deformable body is the principal component. Both these considerations make the elastic case worth to study. 

The key feature of studying a locomotion model is that periodicity is not  anymore what we expect asymptotically, since it would mean the inability of the gait to produce any advancement.
A proper way to address  this issue comes from \emph{geometrical mechanics}, with the notion of \emph{relative-periodicity}. The concept applies to flows on a manifold $M$ which are equivariant with respect to the action of a Lie Group $G$, so that a reduced dynamics can be properly defined on the quotient $M/G$. An orbit on $M$ is relative-periodic if its projection on $M/G$ is periodic. This means that a relative-periodic solution with initial state $s_0\in M$ reaches, after a period, a state $\Gamma_g s_0$, where $\Gamma_g$ describes the action of an element $g\in G$.

This structure becomes simpler and intuitive in our basic model of crawling. The manifold is the phase space $\R^{2n}$ and the symmetry group $G\cong\R$ consists of the rigid translations of the body along the line, since elastic forces depend only on the distances between the blocks and not on their absolute positions along the line. Relative-periodicity corresponds to periodicity for a reduced $\R^{2n-1}$ dynamics which describes the shape the crawler (i.e. relative distances between the blocks) and the velocity of each block. However, a relative-periodic motion of the crawler is not necessarily a periodic one when seen on the whole phase space $\R^{2n}$: after a cycle the crawler might have advanced of a certain distance $g\in \R$, so that the final state is a rigid translation by $g$ of the initial one, with which it shares shape and velocities.

The link between relative-periodicity and locomotion is well known \cite{KelMur}. The most famous examples, however, usually consider simplified models where, for any given time-periodic input, all solutions of the systems are relative-periodic: we mention, for instance, kinematic descriptions of  wheeled robots or models for swimming at low Reynolds number \cite{FaPaZo,KelMur}. Yet, in most concrete situations relative-periodicity emerges only as an asymptotic behaviour of the system. Usually this relates with the relevance of inertia and/or elasticity in the equations governing locomotion, making an initial velocity and/or deformation incompatible with any relative-periodic solution of the system. In such situation what we might expect is a relative-periodic Massera type result: that is, every solution whose reduced dynamics on $M/G$ is bounded admits a limit cycle in $M/G$. In our case, this will mean that every solution bounded in shape and velocity converges to a relative-periodic solution.

For crawling models analogous to the one of Figure~\ref{fig:crawler}, this type of asymptotic behaviour has been studied in \cite{BPZZ16,FigKny,GiMaRe,WagLau} assuming prescribed shape or in \cite{ColGidVil} at a quasistatic regime with dry friction. Both situations significantly reduce the dimension and the complexity of the system; in particular with prescribed shape the dynamics is described simply by a scalar first order differential equation.  

In this paper we study instead the full system, where both  elastic links and the inertia of the blocks are considered.
The complication added by this framework lies not only in the  higher dimensionality of the system, but also on the fact that, compared to the other cases, monotonicity properties of the friction forces are present only on a subspace, so they cannot be directly employed to obtain a contraction-type structure of the solutions.

Extending our view also to other locomotion models, the convergence to a limit relative-periodic behaviour has been observed for other models with prescribed shape, e.g.~\cite{PoFeTa,SanZop}, or for quasistatic models with an elastic body, studied only in a small-actuation limit perturbing the autonomous fully-passive system \cite{ADGZ17,Levillain24,MonDeS,PasOr,TINK12}. However, up to our knowledge, a full second-order dynamics with inertia and an elastic body has been considered until now only in \cite{EldJac}, where a system of four masses in a tetrahedral structure  with elastic links were studied, again with perturbative methods in the small-actuation limit. In this work, we prove instead our convergence results directly for the time-periodic system, allowing large deformations.

Massera-type results  leave open  two relevant questions, which we also address for our model: whether all bounded solutions converge to the same unique limit cycle and whether a bounded (on $[t_0,+\infty)$) solution exists to start with.

  The uniqueness of the limit cycle in models of  crawling seems to be at least partially related to  the strong monotonicity of the friction force-velocity laws \cite{GiMaRe}. In this paper, such a property is assumed in hypothesis \ref{hyp:A4} and uniqueness of the limit behaviour is  provided jointly with our Massera-type Theorem. However, strong monotonicity fails in some other relevant scenarios not considered here, e.g.~for dry friction, and multiplicity of the limit behaviour has been observed in such a case for simplified versions of our model. Precisely, multiplicity of the limit velocity has been shown for a dynamic model with prescribed shape \cite{GiMaRe}, whereas multiplicity of the limit cycle in the shape component can be found in a quasistatic setting \cite{ColGidVil}.

   Regarding the existence of a bounded solution,  such property is often obtained by finding  a  suitable Lyapunov-like  or guiding function which expresses   some dissipation mechanism of the dynamical system. In these cases what is found is actually  the existence of  a bounded set in which all the solutions of  our system   eventually enter.  This property of a dynamical system is called point  dissipativity \cite{Hale80,levinson44}.
   Notice that when a contractive structure of  solutions is present, as it is the case for our reduced dynamics,  point dissipativity is  actually  equivalent to the existence of a solution bounded in future, instead of just a sufficient condition.
     
  In this paper we prove the point dissipativity of the reduced dynamics in Lemma~\ref{lemma:stiff_dissip} under the additional assumption \ref{hyp:stiffbody}, which introduces some symmetry in the model and requires a sufficiently stiff body. We also observe that boundedness can be easily deduced  in the case of time-dependent viscous friction, cf.~Theorem~\ref{th:attractor_viscous}. 
  
However, such  property does not hold  in general  for our model of locomotion. In fact, in Section~\ref{sec:massera} we present an example of a system for which the reduced dynamics is unbounded (see Example~\ref{ex:resonance}).  The friction forces considered  in the example are bounded, so that they  do not verify \ref{hyp:A4},  and there is an internal resonance which drives the  the oscillations in the shape  to infinity.  
This leaves open the question  whether or not general  strongly monotone friction forces overcome internal resonances, thus leading to a global limit cycle.

The  paper is structured as follows.  In  Section~\ref{sec:model}, we  introduce our model of rectilinear locomotion  and its  main structural assumptions, showing the global forward existence of solutions in  Theorem \ref{th:glob_existence}. The  definitions of relative-periodic solution  and reduced dynamics for our rectilinear model  are discussed   in Section~\ref{sec:relative}. 
In Section~\ref{sec:massera} we establish a   Massera-type theorem for the reduced dynamics (Theorem \ref{th:Massera_red}),  and give the corresponding  result  for the locomotion model (Corollary \ref{cor:Massera_loc}).  We also establish the contractive property of the reduced dynamics (Theorem \ref{th:contrazione} and Corollary \ref{cor:contrazione_lip}). This section concludes  with the above mentioned example of divergent behaviour (Example~\ref{ex:resonance}).
Section~\ref{sec:bound} discusses the results about point dissipativity  of our model in the case of viscous friction (Theorem \ref{th:attractor_viscous}) or under  suitable additional assumptions (equal masses,  equal friction forces and sufficient stiffness  of the links, see  Theorem \ref{th:attractor_stiff}). Proofs of these results are provided in Section~\ref{sec:viscous} and Section~\ref{sec:stiff}, respectively.


\section{A model of rectilinear crawling locomotion} \label{sec:model}
\begin{figure}
	\centering
	\centering
	\begin{tikzpicture}[line cap=round,line join=round,x=4mm,y=4mm, line width=1pt]
		\clip(-3,-2) rectangle (33,7);
		\draw [line width=1pt, fill=gray!40] (0,0.5)-- (3,0.5)--(3,3.5)-- (0,3.5)-- (0,0.5);
		\draw[decoration={aspect=0.5, segment length=1.5mm, amplitude=1.5mm,coil},decorate] (3,2)-- (5.2,2.);
		\draw[decoration={aspect=0.5, segment length=1.5mm, amplitude=1.5mm,coil},decorate] (6.8,2)-- (9.,2.);
		\draw (5.2,2)--(6.3,2);
		\draw (6.5,2)--(6.8,2);
		\draw (6.3,1.6)-- (5.5,1.6)--(5.5,2.4)--(6.3,2.4);
		\draw (5.7,1.8)-- (6.5,1.8)--(6.5,2.2)--(5.7,2.2);
		\draw [line width=1pt, fill=gray!40] (9,3.5)-- (9,0.5)-- (12,0.5)-- (12,3.5)-- (9,3.5);
		\draw[decoration={aspect=0.5, segment length=1.5mm, amplitude=1.5mm,coil},decorate] (12,2)-- (14.2,2.);
		\draw[decoration={aspect=0.5, segment length=1.5mm, amplitude=1.5mm,coil},decorate] (15.8,2)-- (18.,2.);
		\draw (14.2,2)--(15.3,2);
		\draw (15.5,2)--(15.8,2);
		\draw (15.3,1.6)-- (14.5,1.6)--(14.5,2.4)--(15.3,2.4);
		\draw (14.7,1.8)-- (15.5,1.8)--(15.5,2.2)--(14.7,2.2);
		\draw [line width=1pt, fill=gray!40] (18,3.5)-- (18,0.5)-- (21,0.5)-- (21,3.5)-- (18,3.5);
		\draw[decoration={aspect=0.5, segment length=1.5mm, amplitude=1.5mm,coil},decorate] (21,2)-- (23.2,2.);
		\draw[decoration={aspect=0.5, segment length=1.5mm, amplitude=1.5mm,coil},decorate] (24.8,2)-- (27.,2.);
		\draw (23.2,2)--(24.3,2);
		\draw (24.5,2)--(24.8,2);
		\draw (24.3,1.6)-- (23.5,1.6)--(23.5,2.4)--(24.3,2.4);
		\draw (23.7,1.8)-- (24.5,1.8)--(24.5,2.2)--(23.7,2.2);
		\draw [line width=1pt, fill=gray!40] (27,3.5)-- (27,0.5)-- (30,0.5)-- (30,3.5)-- (27,3.5);
		\draw [thick] (-2,0.5)-- (32,0.5);
        \fill [pattern = north east lines] (-2,0) rectangle (32,0.5);
		\draw (1.5,2) node[] {$m_1$};
		\draw (10.5,2) node[] {$m_2$};
		\draw (19.5,2) node[] {$m_3$};
		\draw (28.5,2) node[] {$m_4$};
	\end{tikzpicture}
	\caption{The toy model of rectilinear crawling locomotion}
	\label{fig:crawler}
\end{figure}
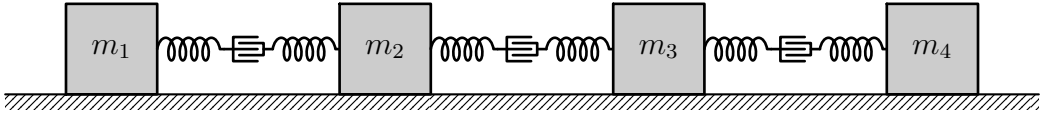

We now consider the  mechanical  system  represented in Figure~\ref{fig:crawler}, consisting of a chain of blocks on a line. Each block has mass $m_i>0$ and position along the line $x_i\in\R$, for $i=1,\dots,n$ (more generally, $x_i$ should be seen as the displacement of the block with respect to a given reference configuration, cf.~Remark~\ref{rem:displacement}. Thus the state of the system is described by the column vector  $x=(x_1,\cdots, x_n)^\top\in \R^n,$ where the $(\cdot)^\top$ denotes the transposition operation. In what follows, for sake of simplicity, if $x$ and $v$ are are column vectors,  we  denote by  $(x,v)$ the column vector $(x^\top, v^\top)^\top$.
We introduce  the inertia matrix   $\M\in\R^{n\times n}$
\begin{equation}\label{eq:massmatrix}
	\mathbb{M}=\diag(m_1,m_2,\cdots,  m_n),\quad \,m_i>0,
\end{equation}
\begin{equation*}
\norm{\cdot}_{\mathbb{M}}\coloneqq \langle{\mathbb{M}}\cdot, \cdot\rangle,
\end{equation*}
where $\langle\cdot,\cdot\rangle$ denotes the standard euclidean inner product in $\R^k$, with associated norm   $\norm{\cdot}$.  
The evolution of our model  is  governed  by following system of second order ODEs:
\begin{equation}\label{eqcrawl}
	{\mathbb{M}}\ddot x+F(t,\dot x)+\nabla_x\EE(t,x)=0
\end{equation}
where $\nabla_x$ denotes the gradient with respect to the $x$ variable, here represented for convenience as a column vector, and the dot $\dot{\ }$ denotes the (total) time-derivative $\frac{\dd}{\dd t}$.

In the model two types of forces are considered: friction forces, described   by $-F(t,\dot x)$, and elastic forces, described by  $-\nabla_x\EE(t,x)$.

The vector function $F\colon \R\times \R^n\to \R^n$ is of the form  $F(t,v)=(F_1(t,v_1),\cdots, F_n(t,v_n))^\top$. This means that on each block is acting a friction force $-F_i=-F_i(t,v_i)$, where $v_i$ is the velocity of the $i$-th block. Please notice that we have introduced friction using functions $F_i$, so that the actual friction force is $-F_i$: we made this choice so that \eqref{eqcrawl} resembles the custom notation for scalar damped oscillators.
 We make the following assumptions on $F$:
\begin{enumerate}[label=\textup{(A\arabic*)}]
	\item \label{hyp:A1} $F\colon \R\times \R^n\to\R^n$ is a Carathéodory function\footnote{We recall that a function $f=f(t,u)\colon \R\times \R^d\to \R$ satisfies is a  \emph{Carathéodory function} if the following conditions hold:
		\begin{itemize}
			\item for every $u\in\R^d$ the function $f(\cdot,u)$ is measurable in $t$;
			\item for almost every $t\in\R$ the function $f(t,\cdot)$ is continuous in $u$;
			\item for every compact set $K\subset \R\times \R^d$ there exists a Lebesgue integrable function $b_K(t)$ such that $\abs{f(t,u)}\leq b_K(t)$ for every $(t,u)\in K$.
	\end{itemize}}  and is  $T$-periodic in $t$;
	\item \label{hyp:A2} $F(t,v)$ is locally  Lipschitz continuous in $v$ uniformly in $t\in[0,T]$, namely, for every compact set $K\subset \R^n$ there exist a constant $\Lambda_K$ such that for every $t\in[0,T]$ and $v,w\in K$ it holds $\norm{F(t,v)-F(t,w)}\leq \Lambda_K \norm{v-w}$;
	\item \label{hyp:A3} $F_i(t,0)=0$ for every $t\in \R$ and $i=1,\dots,n$;
	\item \label{hyp:A4} there exists $\hat \mu>0$ such that
	\begin{equation*} 
		\langle F(t,v)-F(t,w),v-w\rangle\, \geq \hat \mu\norm{v-w}^2 \qquad \text{for every $(t,v,w) \in\R\times \R^n\times \R^n$.}
	\end{equation*}
\end{enumerate}
The first two are classical regularity assumptions, that can be equivalently reformulated by requiring  the analogous properties  on each component. The third one comes from the structure of friction forces: namely the force acting on a block is zero if the velocity of the block is zero. Assumption \ref{hyp:A4}  gives the strong monotonicity of each $F_i$, which combined with \ref{hyp:A3} guarantees  that friction forces are always opposing motion.

Elastic forces are, as usual, introduced as $-\nabla_x\EE(t,x)$, where  $\EE(t,x)$ is the elastic energy.  Notice that we are considering a \emph{time-dependent} elastic energy $\EE(t,x)$: it will account not only for deformations from the rest configuration, but also for the actuation on the system, described as a time-dependent change in the rest configuration.  

The peculiarity of locomotion models is that the elastic energy does not depend on the whole configuration space, but can be seen as defined on the subspace identifying the shape of the locomotor. To be more precise, let us introduce the projection matrix
$P\in\R^{(n-1)\times n}$   given by  
\begin{equation*}
P\coloneqq \begin{pmatrix}
	-1 & 1 & 0& 0&\cdots& 0 & 0\\
	0 & -1 & 1& 0&\cdots& 0 & 0\\
	\vdots &\vdots & \vdots  &\vdots&\cdots &\vdots  &\vdots\\
	0& 0 &0 &0&\cdots&-1&1
\end{pmatrix}
\end{equation*}
The matrix $P$ associates to each states $x\in \R^n$ the corresponding shape vector $z\in\R^{n-1}$  defined  by 
\begin{equation*}
	z=P x\in\R^{n-1}
\end{equation*}
with components  $z_i=x_{i+1}-x_i$, for $i=1,\dots,n-1$.
We assume that
\begin{enumerate}[label=\textup{(A\arabic*)},resume]
\item  \label{hyp:A5}	the elastic energy $\EE(t,x)\colon\R\times\R^n\to [0,+\infty)$  has the form
\begin{equation*}
	\EE(t,x)=\frac{1}{2}\norm{z-L(t)}_{\Ash}^2=\frac{1}{2}\norm{P x-L(t)}_{\Ash}^2
\end{equation*}
where $\Ash\in\R^{(n-1)\times (n-1)}$ is a symmetric positive-definite matrix, defining the norm $\norm{\cdot}_{\mathbb{\Ash}}\coloneqq \langle{\mathbb{\Ash}}\cdot, \cdot\rangle$ on $\R^{n-1}$,  and 
$L\colon\R\to\R^{n-1},$ $L(t)=(L_1(t),\cdots, L_{n-1}(t))^\top$   is a $T$-periodic  function belonging to $L^\infty([0,T])$.
\end{enumerate}

 It will be convenient to express the elasticity matrix as an operator on the whole space of configurations $\R^n$, instead of on the smaller shape subspace. To do so, we set $\A\coloneqq P^\top \Ash P\in \R^{n\times n}$. We note   that  $\A$ is positive semidefinite since it has rank $n-1$. 
  Hence, we have
\begin{equation} \label{eq:Aenergy}
\nabla_x\EE(t,x)=\A x -\ell(t) \qquad\text{where\, $\ell(t)= P^\top\Ash L(t)$}
\end{equation}

\begin{example}
The simplest example of this structure is the one shown in Figure~\ref{fig:crawler}, where we have $n-1$ active elastic links, all having the same elastic constant $k>0$, joining each couple of consecutive blocks. In such a case we have $\Ash=k\Id_{n-1}$, where $\Id_q$ denotes the $q\times q$ identity matrix, and
$L_i(t)$ is the rest length of the $i$-th link. Thus, the terms in the gradient $\nabla_x\EE(t,x)$ in \eqref{eq:Aenergy} take the form
\begin{align*}
\A=k\,\begin{pmatrix} 
		1  & -1 & 0 & \cdots & 0 & 0\\
		-1 &  2 & -1& \cdots & 0 & 0\\
		0  & -1 & 2 & \cdots & 0 & 0\\
		\vdots & \vdots & \vdots & \ddots & \vdots & \vdots \\
		0 & 0 & 0 & \cdots & 2 & -1\\
		0 & 0 & 0 & \cdots & -1 & 1\\
	\end{pmatrix}\,,
&&
\ell(t)=k\,\begin{pmatrix}
	-L_1(t)\\
	-L_2(t)+L_1(t)\\
	-L_3(t)+L_2(t)\\
	\vdots\\
	-L_{n-1}(t)+L_{n-2}(t)\\
	L_{n-1}(t)
\end{pmatrix}
\end{align*}
A general positive definite matrix $\Ash$ allows us to consider more general situation, where the links have different elasticity constants $k_i$, or where more complex structures are present, with links joining also non adjacent blocks.
\end{example}

\smallskip

Under   assumptions  \ref{hyp:A1} \ref{hyp:A2}, \ref{hyp:A3}, \ref{hyp:A5}, from  the general theory of ODEs \cite[Section 1.5]{Hale80}  it follows  that for any $(t_0, x_0, v_0)\in \R\times\R^n\times\R^n$
there exists a unique  maximal generalized solution of \eqref{eqcrawl} $t\to x(t)$ defined on an open interval $I$ containing $t_0$ such that  $(x(t_0), \dot{x}(t_0))=(x_0, v_0).$   We recall that a generalized solution $x(t)$ of \eqref{eqcrawl} is a function defined on a nondegenerate interval $I$  which belongs to $W^{2,1}_\mathrm{loc}(I, \R^n)$ (i.e.~a function with absolutely continuous derivative $v=\dot{x}$) which satisfies \eqref{eqcrawl}  almost everywhere in $I$. Throughout the paper, solutions of differential equations are to be intended in this generalized sense.
   
The existence in future of all solutions of system \eqref{eqcrawl} is guaranteed by the following result, where we replace \ref{hyp:A4} with the more general assumption  \ref{hyp:Dfric} on the friction forces. 

\begin{theorem}\label{th:glob_existence}
Assume that  \ref{hyp:A1} \ref{hyp:A2}, \ref{hyp:A3}, \ref{hyp:A5} and 
\begin{enumerate}[label=\textup{(A4$^*$)}] 
	\item\label{hyp:Dfric}  $\langle F(t,v),v\rangle\, \geq 0$,  for any $(t,v) \in\R\times \R^n$
\end{enumerate}
 hold.   Fixed  any $t_0\in\R$, the solutions of system \eqref{eqcrawl}  are defined on $[t_0,+\infty).$
\end{theorem}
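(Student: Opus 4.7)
The plan is to establish an a priori bound on $(x(t),\dot x(t))$ on every bounded interval $[t_0,T]$ containing $t_0$. By the standard continuation argument for Carathéodory ODEs (see e.g.~\cite[Section 1.5]{Hale80}), this rules out finite-time blow-up in future and hence forces the maximal interval of existence to contain $[t_0,+\infty)$.

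The key tool will be an energy-like Lyapunov function. Since $\nabla_x\EE(t,x)=\A x-\ell(t)$ with $\ell\in L^\infty$, I rewrite \eqref{eqcrawl} as
\begin{equation*}
\mathbb{M}\ddot x+F(t,\dot x)+\A x=\ell(t)
\end{equation*}
and introduce
\begin{equation*}
V(t)\coloneqq \tfrac12\norm{\dot x(t)}_{\mathbb{M}}^2+\tfrac12\langle \A x(t),x(t)\rangle.
\end{equation*}
Both terms are nonnegative ($\A$ is positive semidefinite), so $V\ge 0$. A direct computation using the equation gives
\begin{equation*}
\dot V(t)=\langle \mathbb{M}\ddot x,\dot x\rangle+\langle \A x,\dot x\rangle=-\langle F(t,\dot x),\dot x\rangle+\langle \ell(t),\dot x\rangle,
\end{equation*}
and hypothesis \ref{hyp:Dfric} eliminates the friction term with a favourable sign, leaving
\begin{equation*}
\dot V(t)\le \langle \ell(t),\dot x(t)\rangle\le \norm{\ell(t)}\,\norm{\dot x(t)}\le C\norm{\ell(t)}\sqrt{V(t)},
\end{equation*}
where $C=\sqrt{2/\min_i m_i}$. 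Since $\ell$ is bounded on $[t_0,T]$, a standard Gronwall/integration argument on $\sqrt{V(t)+1}$ shows that $V(t)$, and hence $\norm{\dot x(t)}$, is bounded on every finite interval $[t_0,T]$. A bound on $x(t)$ then follows from $x(t)=x(t_0)+\int_{t_0}^t\dot x(s)\dd s$.

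With $(x,\dot x)$ a priori bounded on every bounded subinterval of the maximal interval of existence to the right of $t_0$, the classical continuation theorem excludes a finite right endpoint of the maximal interval, yielding the claimed global forward existence. The only delicate point is that $\ell$ (and hence the forcing) is merely in $L^\infty$ rather than continuous, but this causes no problem since only $\norm{\ell}_{L^\infty([t_0,T])}$ enters the estimate; the fact that $\A$ is only positive semidefinite is likewise harmless, as we recover $\norm{x}$ by integrating the bound on $\norm{\dot x}$ rather than from the potential term of $V$.
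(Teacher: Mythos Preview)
Your proof is correct and follows essentially the same route as the paper's: both use the energy $V(t)=\tfrac12\norm{\dot x}_{\mathbb{M}}^2+\tfrac12\langle \A x,x\rangle$, drop the friction term via \ref{hyp:Dfric}, and control the remaining $\ell$-term by a Gronwall-type argument. The only cosmetic difference is that the paper applies Young's inequality followed by Gronwall, whereas you integrate the inequality $\dot V\le C\norm{\ell}\sqrt{V}$ directly via $\sqrt{V+1}$; both yield the same a priori bound on bounded intervals.
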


The proof of Theorem~\ref{th:glob_existence} is given in Appendix~\ref{sec:globalexistence} and is based on an energy estimate, adapting to our situation the approach of \cite[Proposition 3.3]{GidRiv}. 

In what follows, without loss of generality, we assume $t_0=0$.


\section{Relative-periodic solutions in rectilinear locomotion} \label{sec:relative}

Our dynamics \eqref{eqcrawl} can be equivalently formulated on the phase space as a system of $2n$ first order differential equations $(\dot x, \dot v)=\Phi(t, x,v)$, namely
 \begin{equation}\label{eq:2n_system}
	\begin{cases}
		\dot{x}= v\\
		\dot v=-\mathbb{M}^{-1}F(t,v)-\mathbb{M}^{-1}\A x +\mathbb{M}^{-1} \ell(t)
	\end{cases}
\end{equation} 
We notice that the vector field $\Phi$ on $\R^{2n}$ defined by \eqref{eq:2n_system} is invariant for translations of the crawler. Namely, writing $\mathds{1}_n=(1,1,\dots,1)^\top\in\R^n$ and $\Gamma_g(x,v)=(x+g\mathds{1}_n,v)$ for any $g\in \R$, we have 
\begin{equation}\label{eq:symmetryaction}
	\Phi(t, x,v)=\Phi(t, \Gamma_g (x,v))
\end{equation} 
for every $(t,x,v,g)\in\R\times\R^n\times\R^n\times \R$.

More precisely, we should say that at  each time $t$  the vector field $\Phi$ is equivariant with respect to the group action $\Gamma_g$ of the symmetry group $G=\mathrm{SE}(1)=\{g\in \R\}$, where $\mathrm{SE}(1)$ is the special Euclidean group on $\R$.
Since the  structures of $\R^{2n}$, seen as a manifold, and of the symmetry group  $\mathrm{SE}(1)$ are trivial, in our presentation we will use the simpler formalism of classical differential equation theory.\footnote{For instance, in general $\Phi$ should be seen as a vector field on the tangent space of $\R^{2n}$ and \eqref{eq:symmetryaction} should read $\dd\Gamma_g \Phi(t, x,v)=\Phi(t, \Gamma_g (x,v))$, where $\dd \Gamma_g$ is the pushforward of $\Gamma_g$, which in our case corresponds to the identity and is therefore omitted.} 

Clearly, the invariance with respect to the action of the symmetry group $G$ is also inherited by the solutions of \eqref{eq:2n_system}, namely if $(x^*,v^*)$ is a solution of \eqref{eq:2n_system}, so is $\Gamma_g(x^*,v^*)$ for every $g\in \R$. Thus, it is reasonable to study  the reduced problem on the quotient space  $\R^{2n}/G\cong \R^{2n-1}$. 

This problem has a clear interpretation in our setting. Indeed, we notice that given two solutions $(x^*,v^*)$ and $(\hat x,\hat v)$, there exists $g\in G$   such that   $(\hat x,\hat v)=\Gamma_g(x^*,v^*)=(x^*+g\mathds{1}_n ,v^*)$   if and only if they have  the same shape, i.e., $P x^*=P \hat x$, and the same velocities, i.e., $v^*=\hat v$.
Thus, recalling that  $\A\coloneqq P^\top \Ash P\in \R^{n\times n}$, the reduced problem on $\R^{2n}/G\cong \R^{2n-1}$ reads
 \begin{equation}\label{eq:reduced}
	\begin{cases}
		\dot{z}= P v\\
		\mathbb{M}\dot v+F(t,v)+P^\top\Ash  z - \ell(t)=0
	\end{cases}
\end{equation} 

In presence of a symmetry, it is often very natural and convenient to study directly the reduced system. However, some attention should be paid on how properties on the reduced system are related to those on the original one. In our case, we are concerned with periodicity. Given a solution $(x,v)$ of \eqref{eq:2n_system} and its projection $(P x,v)$ solving \eqref{eq:reduced}, it is trivial that if $(x,v)$ is $T$-periodic solution, then also it projection $(P x,v)$ is $T$-periodic. However, the converse does not hold. 
In an equivariant system with a symmetry group $G$, solutions whose projection on the quotient space is periodic are called \emph{relative-periodic}. Accordingly, for our locomotion model \eqref{eqcrawl} we give the following definition.

\begin{defin}\label{def:relativeperiodic}
	A solution $x$ of the rectilinear locomotion model \eqref{eqcrawl} is \emph{relative-periodic} if and only if $(P x, \dot x)$ is a periodic solution of the reduced problem \eqref{eq:reduced}.
\end{defin}
\noindent More precisely, we see that a solution $x(t)$ of  \eqref{eqcrawl} is relative $T$-periodic if and only if
\begin{itemize}
	\item its shape $z(t)=Px(t)$ is $T$-periodic
	\item  its velocity vector $v(t)=\dot x(t)$ is $T$-periodic.
\end{itemize}
It follows that if $\hat x$ is relative $T$-periodic, then $(x^*(T),\dot x^*(T))=\Gamma_g(x^*(0),\dot x^*(0))$ for some $g\in G$, which is usually called \emph{geometric phase} of the relative-periodic solution. In other words, relative-periodic solutions are periodic up to the action of an element of the symmetry group $G$.

Notice that, given a periodic solution of~\eqref{eq:reduced}, it is always possible to reconstruct a corresponding relative-periodic solution of~\eqref{eqcrawl}. 

\begin{lemma}\label{lemma:reconst}
Given a solution $(z^*,v^*)$ of the reduced system~\eqref{eq:reduced}, there exists a  solution $x^*$ of the locomotion model~\eqref{eqcrawl} such that $(Px^*,\dot x^*)=(z^*,v^*)$. Moreover, if $(z^*,v^*)$ is $T$-periodic, then $x^*$ is relative $T$-periodic with geometric phase $g^*$ given by
	\begin{equation}\label{eq:reconstruction}
		g^*= \int_{0}^{T}v_1(t)\dd t 
	\end{equation}
\end{lemma}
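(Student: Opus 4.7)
The plan is to construct $x^*$ explicitly by integrating $v^*$ from a suitable initial position. Since $\ker P=\mathrm{span}(\mathds{1}_n)$, the projection $P$ is surjective onto $\R^{n-1}$, so I can pick any $x^*_0\in\R^n$ with $Px^*_0=z^*(0)$ and define
\[ x^*(t)\coloneqq x^*_0+\int_0^t v^*(s)\,\dd s. \]
By construction $\dot x^*=v^*$; moreover $\frac{\dd}{\dd t}(Px^*)=Pv^*=\dot z^*$, which, combined with the matching initial condition $Px^*(0)=z^*(0)$, gives $Px^*(t)=z^*(t)$ for every $t$.

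The next step is to verify that $x^*$ solves \eqref{eqcrawl}. Using \eqref{eq:Aenergy} and the identity $Px^*=z^*$ just established,
\[ \nabla_x\EE(t,x^*(t))=\A x^*(t)-\ell(t)=P^\top\Ash z^*(t)-\ell(t), \]
so \eqref{eqcrawl} evaluated along $x^*$ reduces exactly to the second equation of \eqref{eq:reduced}, which holds by hypothesis.

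Finally, if $(z^*,v^*)$ is $T$-periodic then $Px^*=z^*$ and $\dot x^*=v^*$ are both $T$-periodic, so $x^*$ is relative $T$-periodic by Definition~\ref{def:relativeperiodic}. To compute the geometric phase I would write $x^*(T)-x^*(0)=\int_0^T v^*(s)\,\dd s$ and observe that for every $i=1,\dots,n-1$,
\[ \int_0^T v^*_{i+1}(s)\,\dd s-\int_0^T v^*_i(s)\,\dd s=\int_0^T (Pv^*)_i(s)\,\dd s=\int_0^T \dot z^*_i(s)\,\dd s=z^*_i(T)-z^*_i(0)=0 \]
by $T$-periodicity of $z^*$. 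Hence all components of $x^*(T)-x^*(0)$ coincide and equal $g^*=\int_0^T v^*_1(s)\,\dd s$, so that $x^*(T)=x^*(0)+g^*\mathds{1}_n$. No step looks delicate; the only conceptual point worth stressing is that the consistency of the geometric phase across blocks (i.e.\ the fact that $\int_0^T v^*(s)\,\dd s$ is proportional to $\mathds{1}_n$) is automatic precisely because the shape $z^*$ is $T$-periodic, which is what makes the reduction to $\R^{2n}/G$ well behaved in the first place.
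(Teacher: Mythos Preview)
Your proof is correct and follows essentially the same approach as the paper. The only cosmetic difference is that the paper builds $x^*$ component-by-component (setting $x^*_1(t)=\int_0^t v^*_1$ and then $x^*_i=x^*_{i-1}+z^*_{i-1}$), which amounts to a particular choice of your $x^*_0$; the verification that $(Px^*,\dot x^*)=(z^*,v^*)$ and the computation of the geometric phase via $\int_0^T \dot z^*_i=0$ are identical.
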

\begin{proof}
We define,  by induction 
\begin{align*}
x^*_1(t)\coloneqq\int_0^t v_1(s)\dd s\,, && x^*_i(t)\coloneqq x^*_{i-1}(t)+z^*_{i-1}(t) \quad\text{for $i=2,\dots,n$}
\end{align*}
It is easily verified that this $x^*$ solves \eqref{eqcrawl} and $(Px^*,\dot x^*)=(z^*,v^*)$. 

If, in addition $z^*$ is $T$-periodic, then its derivative $\dot z^*$ has zero mean on $[0,T]$ and consequently we have 
\begin{equation*}
	\int_{0}^{T}v_1(t)\dd t =\int_{0}^{T}v_2(t)\dd t =\cdots= \int_{0}^{T}v_n(t)\dd t 
\end{equation*}
showing that the geometric phase in \eqref{eq:reconstruction} is well-defined.
\end{proof}
\noindent 
Equation \eqref{eq:reconstruction} is usually called \emph{reconstruction equation}.

We see now how well the notion of relative-periodicity grasps the intuitive notion of \emph{gait}: to a prescribed $T$-periodic actuation $(F(t,v),L(t))$ we associate a $T$-periodic shape change and a geometric phase describing the displacement produced by each iteration of the actuation cycle. 
Classical periodicity would be a too strong notion: periodic solutions of \eqref{eqcrawl} have a zero geometric phase, corresponding to an \emph{incompetent} crawler that after each iteration of the gait returns to the initial position.

As  discussed in the introduction, our goal is to show that, given a $T$-periodic actuation, our model asymptotically attains as $t\to+\infty$ a relative-periodic evolution. The  specific behaviour of the crawler we are   aiming to prove  is formalized  in the following definition.

\begin{defin} \label{def:asympt_gait} We say that the locomotion model \eqref{eqcrawl} has an \emph{unique, globally asymptotically stable, relative-periodic behaviour} if both the following conditions hold.
\begin{enumerate}[label=\textup{\roman*)}]
	\item  System \eqref{eqcrawl} has a relative-periodic solution $x^*$ and all the other relative-periodic solutions are $G$-symmetric to $x^*$, namely they are all and only those of the form $\Gamma_g x^*$ for $g\in G$.
	\item Every solution $\xi$ of  \eqref{eqcrawl} satisfies 
	\begin{subequations}
	\begin{align}
	&\lim_{t\to +\infty}P  (x^*(t)-\xi(t))=0 &&\text{(convergence  of the shape)} \label{eq:conv_shape} \\
	&\lim_{t\to +\infty}  \dot{x}^*(t)-\dot{\xi}(t)= 0 &&(\text{convergence  of the velocities}) \label{eq:conv_vel}			
	\end{align}
	\end{subequations}
\end{enumerate}	
\end{defin}

  We emphasize that the key ideas of this section apply to locomotion models and other mechanical systems in general. However, the possibility to express the reduced and reconstruction equations \eqref{eq:reduced},\eqref{eq:reconstruction} as equations respectively on $\R^{2n-1}$ and $\R$, rather than  on  less approachable manifolds, strongly relies on us considering  a simple example of rectilinear locomotion. For instance, if our crawler, possibly with a more complex bi-dimensional body structure, were instead moving on the whole plane, the symmetry group would become $\mathrm{SE}(2)$, which includes not only translations but also rotations. We refer to \cite{FaPaZo} for a nice presentation on the structure of relative-periodicity for some  models of locomotion in a 2D or 3D space, namely with symmetry group $\mathrm{SE}(2)$ or $\mathrm{SE}(3)$. Remarkably, two types of relative-periodic solution emerge, depending on the chosen actuation: (bounded) quasi-periodic solutions and drifting solutions. According to the symmetry group involved, one type  might be prevalent with respect to the other one.

\section{A Massera-type Theorem} \label{sec:massera}
In this section we present a Massera-type result for our model of locomotion. We prove that \emph{for the reduced dynamics} the existence of a bounded orbit implies the existence of a global periodic attractor, meaning that all solutions of the model converge asymptotically to the same shape and the same velocity. An example at the end of the section  suggests that such bounded orbit may not exist. In Section~\ref{sec:bound} we discuss some sufficient conditions for the existence of a bounded solution of the reduced dynamics.

We first state our result for the reduced system \eqref{eq:reduced}.
\begin{theorem}\label{th:Massera_red}
Suppose that   \ref{hyp:A1} \ref{hyp:A2}, \ref{hyp:A3}, \ref{hyp:A4},  \ref{hyp:A5}  hold,  and that  system \eqref{eq:reduced}  admits a solution $(z^\circ,v^\circ)\in W^{1,1}_\mathrm{loc}([0,+\infty);\R^{2n-1})$ which is bounded on $[0+\infty)$, i.e., there exists a constant $C>0$ such that
\begin{align*}
	\norm{z^\circ(t)}+\norm{v^\circ(t)}<C &&\text{for every $t\in[0,+\infty)$.}
\end{align*}
Then system~\eqref{eq:reduced} admits a $T$-periodic solution $(z^*,v^*)\in W^{1,1}_\mathrm{loc}(\R;\R^{2n-1})$. Moreover, every solution $(z,v)$ of ~\eqref{eq:reduced} converges to $(z^*,v^*)$ as $t\to+\infty$, namely
\begin{equation}
\lim_{t\to +\infty}\norm{z^*(t)-z(t)}+\norm{v^*(t)-v(t)}=0
\end{equation}
\end{theorem}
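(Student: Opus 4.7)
The plan is to recast the problem as a fixed-point question for the Poincaré map $\Pi\colon \R^{2n-1}\to \R^{2n-1}$ associated with the $T$-periodic reduced system \eqref{eq:reduced}, which is well-defined and continuous on the whole of $\R^{2n-1}$ thanks to the global forward existence guaranteed by Theorem \ref{th:glob_existence} (whose assumption \ref{hyp:Dfric} follows from \ref{hyp:A3} and \ref{hyp:A4}) together with continuous dependence on initial data. The central tool is a contraction-type estimate based on the quadratic Lyapunov functional
\begin{equation*}
W(z,v)\coloneqq \tfrac{1}{2}\norm{z}_{\Ash}^2+\tfrac{1}{2}\norm{v}_{\M}^2,
\end{equation*}
which defines an equivalent Euclidean norm on $\R^{2n-1}$ via $d(s,s')\coloneqq\sqrt{2W(s-s')}$.

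For two solutions $(z_1,v_1)$, $(z_2,v_2)$ of \eqref{eq:reduced} and their difference $(\tilde z,\tilde v)=(z_1-z_2,v_1-v_2)$, differentiating $W(\tilde z,\tilde v)$ along the flow yields, after cancellation of the cross-terms $\scal{\Ash\tilde z}{P\tilde v}$ and $\scal{\tilde v}{P^\top\Ash\tilde z}$,
\begin{equation*}
W(\tilde z(t_2),\tilde v(t_2))-W(\tilde z(t_1),\tilde v(t_1))=-\int_{t_1}^{t_2}\scal{F(t,v_1)-F(t,v_2)}{\tilde v}\dd t\leq -\hat\mu\int_{t_1}^{t_2}\norm{\tilde v}^2\dd t,
\end{equation*}
by the strong monotonicity assumption \ref{hyp:A4}. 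Hence $\Pi$ is non-expansive with respect to $d$. To obtain strict contraction, I would argue that if $d(\Pi a,\Pi b)=d(a,b)$ then $\tilde v\equiv 0$ on $[0,T]$ (the a.e.\ identity is upgraded to everywhere because $\tilde v$ is absolutely continuous); inserting $\tilde v=0$ in the second equation of \eqref{eq:reduced} yields $P^\top\Ash\tilde z\equiv 0$, and since the rows of $P$ are linearly independent (so $P^\top$ is injective) and $\Ash$ is positive definite, $\tilde z\equiv 0$. Cauchy-problem uniqueness under \ref{hyp:A2} then forces $a=b$.

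Next I would exploit the bounded solution. Setting $p^\circ_0=(z^\circ(0),v^\circ(0))$, the sequence $p^\circ_n=\Pi^n(p^\circ_0)=(z^\circ(nT),v^\circ(nT))$ is bounded in $\R^{2n-1}$, and its $\omega$-limit set $\omega$ under $\Pi$ is non-empty, compact, and fully invariant, i.e.~$\Pi(\omega)=\omega$ (backward invariance comes from extracting a convergent subsequence of $\Pi^{n_k-1}(p^\circ_0)$). I claim $\omega$ reduces to a point: pick $q,q'\in\omega$ attaining the $d$-diameter $D$ of $\omega$, invoke backward invariance to get $\tilde q,\tilde q'\in\omega$ with $\Pi(\tilde q)=q$, $\Pi(\tilde q')=q'$; by non-expansion $d(\tilde q,\tilde q')\geq D$, so by maximality equality holds, and strict contraction then forces $\tilde q=\tilde q'$, whence $q=q'$ and $D=0$. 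The unique point $p^*$ of $\omega$ is a fixed point of $\Pi$; solving \eqref{eq:reduced} from $p^*$ and extending by the $T$-periodicity of the equation produces the sought $(z^*,v^*)\in W^{1,1}_\mathrm{loc}(\R;\R^{2n-1})$.

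For the convergence assertion, any solution $(z,v)$ of \eqref{eq:reduced} satisfies $W(s(t)-s^*(t))\leq W(s(0)-s^*(0))$ by non-expansion, hence is bounded on $[0,+\infty)$; the same $\omega$-limit argument supplies a fixed point $p^{**}$ to which $s(nT)$ converges, and applying strict contraction to the pair of fixed points $p^*,p^{**}$ yields $p^*=p^{**}$. Non-expansion on each subinterval $[nT,nT+\tau]$ with $\tau\in[0,T]$ then upgrades $s(nT)\to s^*(0)$ to $\lim_{t\to+\infty}\norm{z^*(t)-z(t)}+\norm{v^*(t)-v(t)}=0$, uniformly in the local time variable. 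The main obstacle I foresee is the justification of the strict contraction---in particular the passage from $\tilde v\equiv 0$ to $\tilde z\equiv 0$ via the ODE---which is where the positive definiteness of $\Ash$, the injectivity of $P^\top$, and the Carathéodory regularity \ref{hyp:A2} conspire.
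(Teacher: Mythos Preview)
Your argument is correct and is, in fact, more self-contained than the paper's. Both proofs start from the same Lyapunov functional $W$ (called $E$ in the paper) and the identity $\dot W=-\scal{F(t,v_1)-F(t,v_2)}{\tilde v}\leq -\hat\mu\norm{\tilde v}^2$, but they diverge from there. The paper first establishes a continuous-time contraction result (Theorem~\ref{th:contrazione}): it invokes a theorem of Rouche--Habets--Laloy to deduce $\tilde v(t)\to 0$, and then runs a separate argument---integrating the second equation of \eqref{eq:reduced} over short windows and using the \emph{local} Lipschitz bound \ref{hyp:A2} together with the boundedness of one velocity---to force $\tilde z(t)\to 0$. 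This yields point dissipativity, after which the existence of the periodic solution is obtained by citing a result of Pliss on dissipative periodic systems.

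You instead work directly with the Poincar\'e map, proving that it is strictly non-expansive in the metric $d$ (the step from $\tilde v\equiv 0$ to $\tilde z\equiv 0$ via injectivity of $P^\top\Ash$ is exactly right, and your worry about it is unfounded), and then extract the fixed point via the diameter argument on the compact, fully invariant $\omega$-limit set of the bounded orbit. This avoids both external references and the somewhat delicate shape-convergence analysis of the paper. What the paper's route buys is the stronger intermediate statement of Theorem~\ref{th:contrazione} (asymptotic coalescence of \emph{any} two solutions, one with bounded velocity, in continuous time), which is reused elsewhere; what your route buys is an elementary, citation-free proof of Theorem~\ref{th:Massera_red} itself. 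One terminological nit: what you call ``strict contraction'' is really strict non-expansiveness ($d(\Pi a,\Pi b)<d(a,b)$ for $a\neq b$, with no uniform constant), but that is precisely what the diameter argument on a compact invariant set needs.
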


As an immediate consequence of Theorem~\ref{th:Massera_red} we have the following corollary for the locomotion model \eqref{eqcrawl}.
\begin{corol}\label{cor:Massera_loc}
Suppose that  \ref{hyp:A1} \ref{hyp:A2}, \ref{hyp:A3}, \ref{hyp:A4},  \ref{hyp:A5}  hold, and that system  \eqref{eqcrawl} admits a solution bounded in shape and velocity on $[0,+\infty)$, i.e., there exist a solution $x^\circ\in W^{2,1}_\mathrm{loc}([0,+\infty),\R^{2n})$ of \eqref{eqcrawl} and  a constant $C>0$ such that
\begin{align*}
\norm{P x^\circ(t)}+\norm{\dot x^\circ(t)}<C &&\text{for every $t\in[0,+\infty)$.}
\end{align*}
Then Equation~\eqref{eqcrawl} admits a unique, globally asymptotically stable, relative $T$-periodic behaviour in the sense of Definition~\ref{def:asympt_gait}.
\end{corol}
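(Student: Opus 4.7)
The plan is to reduce everything to Theorem \ref{th:Massera_red} by projection, then reconstruct a relative-periodic solution via Lemma \ref{lemma:reconst}. First I set $z^\circ := P x^\circ$ and $v^\circ := \dot x^\circ$. Direct substitution shows that $(z^\circ, v^\circ)$ is a $W^{1,1}_{\mathrm{loc}}$-solution of the reduced system \eqref{eq:reduced}, and by hypothesis it is bounded on $[0,+\infty)$. Theorem \ref{th:Massera_red} then yields a $T$-periodic solution $(z^*, v^*)$ of \eqref{eq:reduced} to which every solution of the reduced system converges. Applying Lemma \ref{lemma:reconst} to $(z^*, v^*)$ produces a solution $x^*$ of \eqref{eqcrawl} with $(P x^*, \dot x^*) = (z^*, v^*)$; by Definition \ref{def:relativeperiodic}, $x^*$ is relative $T$-periodic.

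Next I verify condition (i) of Definition \ref{def:asympt_gait}. If $\tilde x$ is any relative $T$-periodic solution of \eqref{eqcrawl}, then $(P\tilde x, \dot{\tilde x})$ is a $T$-periodic solution of \eqref{eq:reduced}; by Theorem \ref{th:Massera_red} it converges to $(z^*, v^*)$, and since for every $t$ the sequence $(P\tilde x(t+kT), \dot{\tilde x}(t+kT))$ is constant in $k$ but converges to $(z^*(t), v^*(t))$, it follows that $(P\tilde x, \dot{\tilde x}) \equiv (z^*, v^*)$. Hence $P(\tilde x - x^*) \equiv 0$ and $\tfrac{\mathrm{d}}{\mathrm{d}t}(\tilde x - x^*) \equiv 0$, so $\tilde x - x^*$ is a constant vector in $\ker P = \mathrm{span}(\mathds{1}_n)$, i.e.\ $\tilde x = \Gamma_g x^*$ for some $g \in \R$. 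Conversely, every $\Gamma_g x^*$ is a relative-periodic solution by the equivariance \eqref{eq:symmetryaction}.

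For condition (ii), let $\xi$ be any solution of \eqref{eqcrawl}. Then $(P\xi, \dot\xi)$ solves the reduced system \eqref{eq:reduced}, and Theorem \ref{th:Massera_red} gives
\begin{equation*}
\lim_{t\to +\infty} \bigl(\norm{P\xi(t) - z^*(t)} + \norm{\dot\xi(t) - v^*(t)}\bigr) = 0,
\end{equation*}
which is precisely \eqref{eq:conv_shape} together with \eqref{eq:conv_vel} once one recalls $z^* = P x^*$ and $v^* = \dot x^*$.

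There is essentially no real obstacle here: the corollary is a clean consequence of the reduced theorem plus the reconstruction lemma. The only point requiring a small argument (rather than pure invocation) is extracting uniqueness of the reduced $T$-periodic solution from the \emph{convergence} statement of Theorem \ref{th:Massera_red}; once that is observed, the $G$-symmetry characterization of the full relative-periodic family follows from the structure of $\ker P$.
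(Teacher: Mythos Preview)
Your proof is correct and is exactly the argument the paper has in mind: the corollary is stated as ``an immediate consequence of Theorem~\ref{th:Massera_red}'' with no further proof, and you have simply written out the natural details---project to the reduced system, apply Theorem~\ref{th:Massera_red}, reconstruct via Lemma~\ref{lemma:reconst}, and verify Definition~\ref{def:asympt_gait}. The small extra step you flag (extracting uniqueness of the reduced periodic solution from the convergence statement, then identifying the full relative-periodic family via $\ker P=\mathrm{span}(\mathds{1}_n)$) is indeed the only thing that is not a pure invocation, and your handling of it is fine.
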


In order to prove Theorem~\ref{th:Massera_red}, we first show  a contraction-type result for the reduced dynamics~\eqref{eq:reduced}. 

\begin{theorem}\label{th:contrazione}
Assume that  \ref{hyp:A1} \ref{hyp:A2}, \ref{hyp:A3}, \ref{hyp:A4},  \ref{hyp:A5}  hold. Let $(z,v)$ and $(\zeta,w)$   be two solutions of \eqref{eq:reduced} and assume that $v$ is bounded, namely that there exists a constant $C_v>0$ such that $\abs{v(t)}<C_v$ for every $t\geq 0$. Then
\begin{subequations}
\begin{align}
&\lim_{t\to +\infty}z(t)-\zeta(t)=0 &&\text{(convergence  of the shape)}\label{eq:convform}\\
&\lim_{t\to +\infty}  v(t)-w(t)= 0 &&(\text{convergence  of the velocities}) \label{eq:convelo}
\end{align}
\end{subequations}
\end{theorem}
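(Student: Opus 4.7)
The plan is to build a natural energy-type Lyapunov quantity for the difference $(\tilde z,\tilde v) \coloneqq (z-\zeta,v-w)$ and exploit the strong monotonicity of $F$ supplied by \ref{hyp:A4}. Subtracting two copies of \eqref{eq:reduced} yields
\begin{equation*}
\dot{\tilde z} = P\tilde v, \qquad \M\dot{\tilde v} = -\bigl(F(t,v)-F(t,w)\bigr) - P^\top\Ash\tilde z.
\end{equation*}
I would consider $V(t) \coloneqq \tfrac12\norm{\tilde v(t)}_{\M}^2 + \tfrac12\norm{\tilde z(t)}_{\Ash}^2$. A direct differentiation, in which the cross-terms cancel via $\scal{\Ash\tilde z}{P\tilde v}=\scal{P^\top\Ash\tilde z}{\tilde v}$, gives $\dot V = -\scal{F(t,v)-F(t,w)}{\tilde v}$, which by \ref{hyp:A4} is bounded above by $-\hat\mu\norm{\tilde v}^2$. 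Hence $V$ is non-increasing, converges to some $V_\infty\geq 0$, and $\tilde v\in L^2([0,+\infty);\R^n)$. In particular $\tilde z$ and $\tilde v$ are bounded; combined with the hypothesis that $v$ is bounded, so is $w$, and thus $\dot{\tilde z}=P\tilde v$ is bounded, which makes $\tilde z$ uniformly continuous.

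To establish the convergence of the velocities \eqref{eq:convelo} I would appeal to Barbalat's lemma. Assumption \ref{hyp:A2} together with the $T$-periodicity of $F$ provides a constant $\Lambda$ such that $\norm{F(t,v(t))-F(t,w(t))}\leq\Lambda\norm{\tilde v(t)}$ for all $t\geq 0$. Consequently $\dot{\tilde v}$ is bounded, which makes $\norm{\tilde v}^2$ Lipschitz continuous, hence uniformly continuous. Combined with $\norm{\tilde v}^2\in L^1$, Barbalat forces $\tilde v(t)\to 0$.

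The convergence of the shape \eqref{eq:convform} is more delicate, since $V$ decreasing only guarantees $\norm{\tilde z}_{\Ash}^2\to 2V_\infty$, with no a priori reason for $V_\infty$ to vanish. The idea is to use the elastic coupling to transfer the decay of $\tilde v$ onto $\tilde z$. Integrating the momentum equation on $[t,t+s]$ yields
\begin{equation*}
\int_t^{t+s} P^\top\Ash\tilde z(\tau)\,\dd\tau = -\M\bigl(\tilde v(t+s)-\tilde v(t)\bigr) - \int_t^{t+s}\bigl(F(\tau,v)-F(\tau,w)\bigr)\,\dd\tau.
\end{equation*}
By the previous step and the Lipschitz bound on $F$, the right-hand side tends to $0$ as $t\to+\infty$, uniformly for $s$ in any bounded interval. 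Setting $g(t)\coloneqq P^\top\Ash\tilde z(t)$, which is uniformly continuous since $\tilde z$ is, a standard averaging argument closes the gap: if one had $g(t_n)\to g^*\neq 0$ along some $t_n\to+\infty$, uniform continuity would force $\int_{t_n}^{t_n+\delta} g\,\dd\tau \approx \delta\,g^*\neq 0$ for a small fixed $\delta>0$, contradicting the displayed identity. Hence $g(t)\to 0$; since $P^\top$ is injective (because $P$ has full row rank $n-1$) and $\Ash$ is invertible, we conclude $\tilde z(t)\to 0$.

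The main obstacle I anticipate is precisely this last step: the direct Lyapunov inequality is degenerate in the shape component, so the decay of $\tilde z$ cannot be read off $V$ alone. Overcoming this requires revisiting the momentum equation of the reduced system and combining the $L^2$-in-time and pointwise decay of $\tilde v$ with the uniform continuity of $P^\top\Ash\tilde z$ through a Barbalat-style averaging argument on the elastic force difference.
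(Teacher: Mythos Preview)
Your proposal is correct and follows essentially the same route as the paper: the same Lyapunov quantity $V$ (called $E$ there), the same dissipation estimate $\dot V\leq -\hat\mu\norm{\tilde v}^2$, a Barbalat-type argument for \eqref{eq:convelo} (the paper invokes a theorem of Rouche--Habets--Laloy to the same effect), and then the integrated momentum identity together with the full-rank property of $P^\top\Ash$ and uniform continuity to recover \eqref{eq:convform}. The only cosmetic difference is the order of the final two steps: the paper first applies the rank argument to the averages $\int_t^{t+\alpha}\tilde z$ and then upgrades to pointwise convergence, whereas you run Barbalat directly on $g=P^\top\Ash\tilde z$ and invert afterwards.
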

\begin{proof}

We define the function 
\begin{equation}\label{eq:defE}
	E(z,\zeta,v,w)\coloneqq\frac{\norm{v-w}^2_{\mathbb{M}}}{2}+\frac{\norm{z-\zeta}^2_{\Ash}}{2}
\end{equation}  
so that
\begin{equation}\label{eq:dotE}
	\dot E=\langle \mathbb{M}(\dot v-\dot w), v-w\rangle+ \langle \Ash(z-\zeta), \dot z-\dot \zeta\rangle
\end{equation}
for a.e. $t\in [0,+\infty)$. Using the second equation in \eqref{eq:reduced} for each solution we get
\begin{equation}\label{eq:deriv1}
	\langle \mathbb{M}(\dot v-\dot w), v- w\rangle=-\langle F(t,v)-F(t, w), v- w\rangle-\langle P^\top \Ash (z-\zeta) , v- w\rangle
\end{equation}
for a.e. $t\in [0,+\infty)$.
Combining \eqref{eq:dotE} and \eqref{eq:deriv1}  we obtain 
\begin{equation}\label{eq:dotEb}
	\dot E=-\langle F(t,v)-F(t, w), v- w\rangle\leq -\hat \mu\norm{v- w}^2\leq -\hat \mu C_{\mathbb{M}}\norm{v- w}_{\mathbb{M}}^2 
\end{equation}
 a.e. $t\in [0,+\infty)$,  where the first inequality follows from \ref{hyp:A4} and the second from the equivalence  of norms in $\R^n$, for a suitable constant $C_{\mathbb{M}}>0$.

Let us now define the function
\begin{equation*}
\psi(z,\zeta,v, w)\coloneqq \hat \mu C_{\mathbb{M}} \norm{v- w}^2_{\mathbb{M}} \qquad \text{a.e. $t\in[0, +\infty)$.}
\end{equation*}
 so that the estimate in \eqref{eq:dotEb}  reads $\dot E\leq -\psi$,   a.e. $t\in [0,+\infty)$.
Taking into account \eqref{eq:deriv1} and \ref{hyp:A4} we get  
\begin{equation*}
	\dot\psi\leq - 2\hat \mu C_{\mathbb{M}}\langle P^\top\Ash(z-\zeta), v- w\rangle \qquad \text{a.e. $t\in[0, +\infty)$.}
\end{equation*}
  By the estimate in~\eqref{eq:dotEb} we deduce that  $E(t)$ is decreasing and therefore  $0\leq E(t)\leq E(0)$ for every $t\geq 0$. By \eqref{eq:defE} and the equivalence  of norms in $\R^k$ it follows that 
 $\norm{v(t)- w(t)}$ and $\norm{\zeta(t)-z(t)}$  are bounded on $[0,+\infty)$;  hence, there exists $B\in \R$  such that 
\begin{equation*}
	\dot\psi\leq B \qquad \text{a.e. $t\in[0, +\infty)$.}
\end{equation*}
 We can now apply  Theorem~4.5 in \cite[pag.~287]{RouHaLa}\footnote{ We note that  this theorem is a corollary of Lemma~4.3 in \cite[pag.~286]{RouHaLa} and that both results are stated  assuming suitable inequalities  on the    Dini derivatives $D^+$ of the functions involved. However,    one can  adapt in a quite direct manner the proof of the  Lemma, and hence of the theorem,  to our  Carath\'eodory   setting by replacing the Dini derivatives $D^+$, defined everywhere on $[0, +\infty)$, with the derivatives of the absolutely continuous functions involved, defined almost everywhere on $[0,+\infty)$.} with $\Omega=S=\R^{4n-2},\, a(r)=r,\, V=E,  I=[0, +\infty)$  to the  following  first order system,  obtained by considering two copies of system \eqref{eq:reduced},
\begin{equation}\label{eq:doublereduced}
	\begin{cases}
		\dot{z}= P v\\
\mathbb{M}\dot v+F(t,v)+P^\top\Ash  z - \ell(t)=0 \\
		\dot{\zeta}= P w\\
\mathbb{M}\dot w+F(t,w)+P^\top\Ash  \zeta - \ell(t)=0
	\end{cases}
\end{equation}
and conclude that 
\begin{equation*}
\lim_{t\to +\infty}\psi(t)=0.
\end{equation*}
This proves  \eqref{eq:convelo}.

For $0<\alpha<T$, let us define 
\begin{equation*}
	\Theta(t)\coloneqq \int_t^{t+\alpha}\left[z(s)-\zeta(s)\right]\dd s   \,.
\end{equation*}
We  prove first a weaker form of \eqref{eq:convform}, namely that, for every $\alpha>0$ and every $i=1,\dots,n-1$
\begin{equation}\label{eq:shape_comp_conv}
	\lim_{t\to+\infty}\Theta_i(t)=\lim_{t\to+\infty}\int_t^{t+\alpha}\left[z_i(s)-\zeta_i(s)\right]\dd s =0  \,.
\end{equation}
Subtracting the fourth equation from the second one in \eqref{eq:doublereduced} and integrating the result in the time interval $[t,t+\alpha]$ we get for  the $i$-th component
\begin{equation}\label{eq:conv_integral}
	0=\int_{t}^{t+\alpha}\left[ m_i\bigl(\dot v_i(s)-\dot w_i(s)\bigr)+F_i(s,v_i)-F_i(s,w_i) +e_i^\top P^\top\Ash \bigl(z(s)-\zeta(s)\bigr)\right] \dd s
\end{equation}
where $e_i$ is the $i$-th   vector of the canonical basis of $\R^n.$ 

By the already proven \eqref{eq:convelo},  we deduce that for every $\epsilon>0$ there exist a $\tau_\epsilon>0$ such that, for every $s\geq \tau_\epsilon$, it holds $\abs{v(s)-w(s)}<\epsilon$.

Let us define the compact set $K=[-C_v-1,C_v+1]^n\subseteq \R^n$. Hence, for every $\epsilon\in(0,1)$ and for every time $t>\tau_\epsilon$ we have $v(t),w(t)\in K$. By \ref{hyp:A2}, for every $\epsilon\in(0,1)$ and every $t\geq\tau_\epsilon$ 
\begin{equation}\label{eq:F_estimate}
\int_t^{t+\alpha}\abs{F_i(s,v_i(s))-F_i(s,w_i(s))}\dd s \leq \alpha \Lambda_K \epsilon 
\end{equation}

Thus, for every $\epsilon\in(0,1)$,  every $t\geq\tau_\epsilon$ and every index $i=1,\dots,n$, it holds
\begin{align*}
	\abs{e_i^\top P^\top \Ash\Theta(t)}&=\abs{e_i^\top P^\top \Ash \int_t^{t+\alpha} [z(s)-\zeta(s)]\dd s}\\ &\leq\Bigl[m_i\abs{v_i(t+\alpha)-w_i(t+\alpha)}+m_i\abs{v_i(t)-w_i(t)}  +\int_t^{t+\alpha}\abs{F_i(s,v_i)-F_i(s,w_i)}\dd s \Bigr]\\
	&\leq \left(2m_i+\alpha\Lambda_K\right)\epsilon
\end{align*}

It follows that, for every $i$,  we have $\abs{e_i^\top P^\top \Ash\Theta(t)}\to 0$ as $t\to+\infty$ and, consequently, $\norm{P^\top\Ash\Theta(t)}\to 0$. We observe that the $n\times(n-1)$ matrix $P^\top\Ash$ has maximum rank, i.e., it has rank $n-1$: indeed,  $\Ash$ is an invertible $n-1$ square matrix, while $P^\top$ is an $n\times(n-1)$ matrix with maximum rank.
Thus, it follows that $\norm{\Theta(t)}\to 0$  as $t\to+\infty$ and, accordingly, \eqref{eq:shape_comp_conv}.

It remains to show that the convergence \eqref{eq:shape_comp_conv} implies the stronger property \eqref{eq:convform}, i.e,~$\lim_{t\to +\infty} z(t)-\zeta(t)=0$. To do so, we proceed by contradiction. Suppose that there exists $i\in\{1,\dots,n-1\}$ and a sequence $t_h\to+\infty$ such that $\abs{z_i(t_h)-\zeta_i(t_h)}> \delta$ for some $\delta>0$.  By the continuity of $\dot x$ and $ \dot \xi$,  and by \eqref{eq:convelo}, there exists a constant $c>0$ such that  $\abs{\dot z_i(t)-\dot \zeta_i(t)}<c$ for every $t\in [0, +\infty)$.
 Let us now choose $\alpha$ such that $0<\alpha<\frac{\delta}{c}$. Thus
\begin{equation}\label{eq:integral_est}
\int_{t_h}^{t_h+\alpha}\abs{z_i(s)-\zeta_i(s)}\dd s>\alpha\left( \abs{z_i(t_h)-\zeta_i(t_h)}-\frac{c\alpha}{2}\right)>\alpha\left(\delta-\frac{\delta}{2}\right)=\frac{\alpha\delta}{2}>0
\end{equation}
Since \eqref{eq:integral_est} holds for every $h$, we get a contradiction with \eqref{eq:shape_comp_conv}, proving \eqref{eq:convform}.

\end{proof}

We notice that also the following variation of Theorem~\ref{th:contrazione} holds, where the bound on the velocity of one solution is replaced by the global Lipschitz continuity of the friction~$F$.

\begin{corol} \label{cor:contrazione_lip} Assume that   \ref{hyp:A1} \ref{hyp:A2}, \ref{hyp:A3}, \ref{hyp:A4},  \ref{hyp:A5}  hold. Let $(z,v)$ and $(\xi,w)$   be two solutions of \eqref{eq:reduced} and assume that each $F_i(t,v)$ is globally  Lipschitz continuous in $v$ uniformly in $t\in[0,T]$ for the same Lipschitz constant $C_F>0$. Then \eqref{eq:conv_shape}--\eqref{eq:conv_vel} hold.
\end{corol}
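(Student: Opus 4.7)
The plan is to adapt the proof of Theorem~\ref{th:contrazione} almost verbatim, replacing the one step where the boundedness of $v$ is actually used by a direct application of the global Lipschitz hypothesis. First, I would observe that the argument proving the convergence of velocities \eqref{eq:convelo} does not in fact rely on the boundedness of $v$. Indeed, defining $E$ and $\psi$ as in \eqref{eq:defE}, the estimate $\dot E\le -\psi$ and the inequality $\dot\psi\le -2\hat\mu C_{\mathbb{M}}\langle P^\top\Ash(z-\zeta),v-w\rangle$ hold verbatim. Since $E$ is non-increasing, both $\norm{v-w}_{\mathbb{M}}$ and $\norm{z-\zeta}_{\Ash}$ remain bounded by $\sqrt{2E(0)}$ on $[0,+\infty)$, so $\dot\psi$ is bounded from above by some constant $B$, and the application of Theorem~4.5 in \cite{RouHaLa} yields $\psi(t)\to 0$, i.e.\ \eqref{eq:conv_vel}.

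Next, for the convergence of shapes, I would repeat the argument based on the auxiliary quantity $\Theta(t)=\int_t^{t+\alpha}[z(s)-\zeta(s)]\,\dd s$ for any fixed $0<\alpha<T$, and on the integral identity \eqref{eq:conv_integral}. The only place in the proof of Theorem~\ref{th:contrazione} where the bound on $v$ intervenes is in the estimate \eqref{eq:F_estimate} of $\int_t^{t+\alpha}\abs{F_i(s,v_i)-F_i(s,w_i)}\,\dd s$, which was obtained by applying the local Lipschitz hypothesis \ref{hyp:A2} on a compact set containing $v(s),w(s)$ for $s$ large. Under the hypothesis of the corollary, we may instead write directly
\begin{equation*}
\int_{t}^{t+\alpha}\abs{F_i(s,v_i(s))-F_i(s,w_i(s))}\,\dd s \;\le\; C_F\int_t^{t+\alpha}\abs{v_i(s)-w_i(s)}\,\dd s \;\le\; \alpha\,C_F\,\epsilon
\end{equation*}
for every $t\ge\tau_\epsilon$, where $\tau_\epsilon$ is given by the already established \eqref{eq:conv_vel}. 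Observe that the global Lipschitz bound $C_F$ extends from $t\in[0,T]$ to all $t\in\R$ by the $T$-periodicity of $F$ in \ref{hyp:A1}.

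Inserting this bound back into the chain of inequalities of Theorem~\ref{th:contrazione}, the same reasoning yields $\abs{e_i^\top P^\top\Ash\,\Theta(t)}\le(2m_i+\alpha C_F)\,\epsilon$ for every $t\ge\tau_\epsilon$, whence $\norm{P^\top\Ash\,\Theta(t)}\to 0$ as $t\to+\infty$. The maximum rank argument for $P^\top\Ash$ then gives $\Theta(t)\to 0$, and the final contradiction argument—exploiting the uniform bound on $\dot z-\dot\zeta$ coming from \eqref{eq:conv_vel} together with \eqref{eq:reduced}—upgrades this to the pointwise convergence \eqref{eq:conv_shape}. There is essentially no new obstacle: the structure of the proof of Theorem~\ref{th:contrazione} was already set up so that weakening or strengthening the integrand estimate on $F$ is the only point of contact with the hypothesis on the bounds of $v$.
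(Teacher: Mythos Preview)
Your proposal is correct and follows essentially the same approach as the paper's own proof: the paper notes that the bound on $v$ in Theorem~\ref{th:contrazione} is used only to obtain the estimate \eqref{eq:F_estimate} via local Lipschitz continuity on a compact set, and that under global Lipschitz continuity one may replace $\Lambda_K$ directly by $C_F$. You spell out the same observation in more detail, but the idea is identical.
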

\begin{proof}
The proof closely follows that of  Theorem~\ref{th:contrazione}, with a minor adjustment. Specifically, we notice that the bound $\abs{v(t)}<C_v$ of  Theorem~\ref{th:contrazione} is required only to to construct the compact set $K$ where local Lipschitz continuity from \ref{hyp:A2} can be employed to obtain the estimate \eqref{eq:F_estimate}.  However, if global Lipschitz continuity is assumed instead, then \eqref{eq:F_estimate}  holds directly  with $C_F$ replacing$\Lambda_K$.
\end{proof}

 We are now ready to prove our main result for the reduced system \eqref{eq:reduced}.
 
 \begin{proof}[Proof of Theorem~\ref{th:Massera_red}]
Let  $x^\circ(t)$ and $C>0$ be  as in the statement of the theorem. Define the compact set 
\begin{equation*}
{\mathcal{K}}\coloneqq \{(z,v)\in \R^{2n-1}\,:\, \norm{z}+\norm{v}\leq 2C\}.
\end{equation*}
Fix  $(z_0,v_0)\in \R^{2n-1}$  and let $(z(t), v(t))$ be the solution of \eqref{eq:reduced} such that $(z(0),v(0))=(z_0, v_0)$. By Theorem \eqref{th:contrazione}, we have that there exists $t_0=t_0(z_0, v_0)>0$ such that 
$(z(t),v(t))\in {\mathcal{K}}$  for any $t\geq t_0(z_0, v_0)$. 

Then, system \eqref{eq:reduced}  is point dissipative,  and by   \cite[Corollary 2.1]{Pliss}  we get  that system ~\eqref{eq:reduced} admits a $T$-periodic solution $(z^*,v^*)$.   In our setting such solution belongs  to $W^{1,1}_\mathrm{loc}(\R;\R^{2n-1})$. Moreover, applying again Theorem \ref{th:contrazione},  we see that  every other solution $(z,v)$ of ~\eqref{eq:reduced} converges to $(z^*,v^*)$ as $t\to+\infty$, namely
\begin{equation}
\lim_{t\to +\infty}\norm{z^*(t)-z(t)}+\norm{v^*(t)-v(t)}=0.
\end{equation}
Our proof is concluded.
 \end{proof}

 We end this section with an  example   which shows  that  the shape and the velocities of our locomotor may  not be bounded on $[0,+\infty)$. The system considered in the example is such that  the friction forces are globally bounded, so condition  \ref{hyp:A4} does not hold; notice, however, that this example covers smooth approximations of dry friction.

\begin{example} \label{ex:resonance}

\begin{figure}[th]
		\centering
		\subcaptionbox{Shape $z(t)$.\label{subfig:res_shape}}
		{	\includegraphics[scale=0.2]{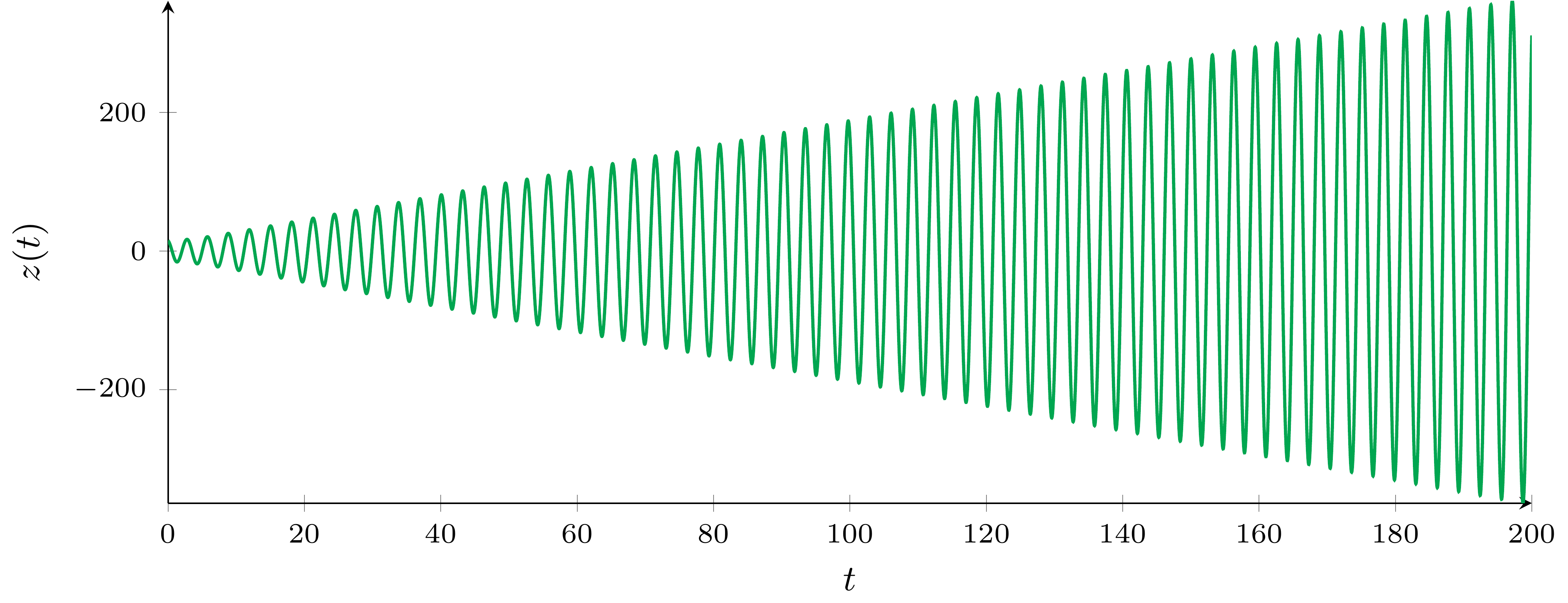}
			}\\[1cm]
		\subcaptionbox{Velocity $\dot{\bar x}(t)=\frac{1}{2}(\dot x_1(t)+\dot x_2(t))$ of the barycenter.\label{subfig:res_vel}}
		{  \includegraphics[scale=0.2]{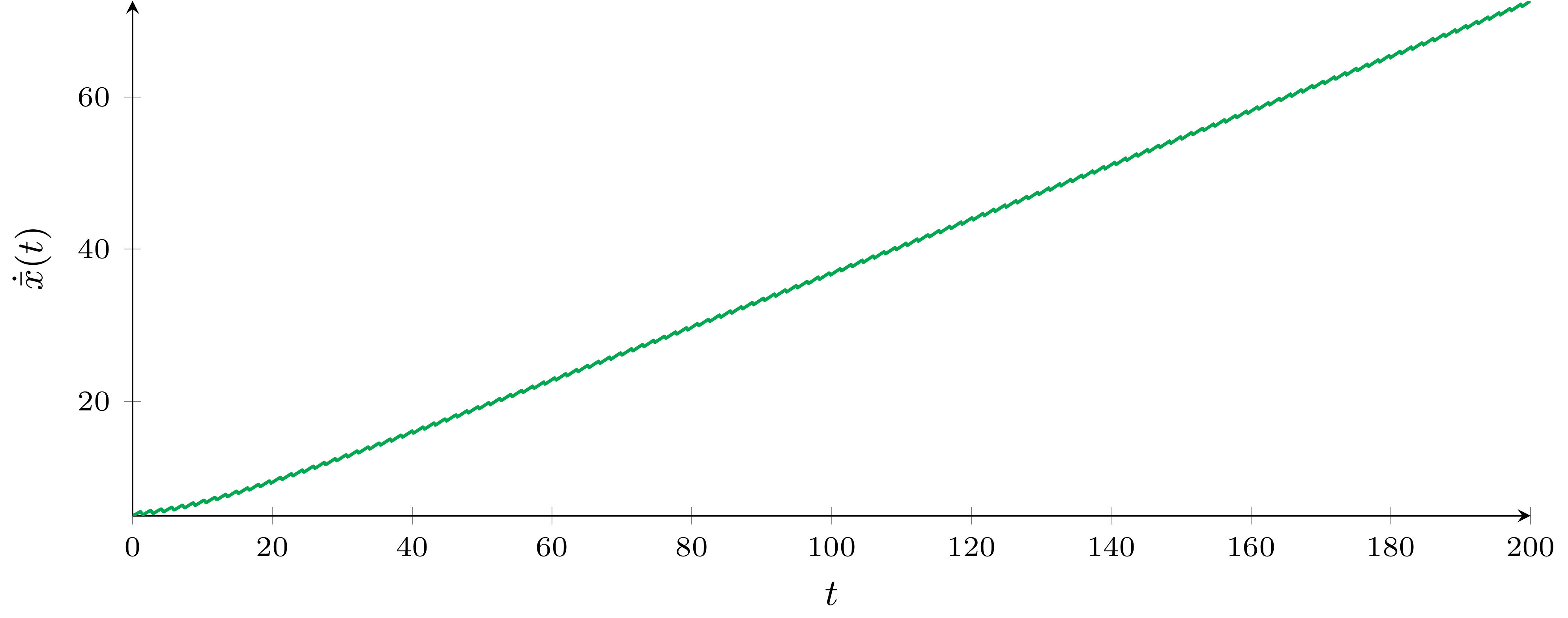}}
		\caption{Plots of  the shape and velocity of the barycenter of the solution of System \eqref{eq:example2} in Example~\ref{ex:resonance}  with parameters $k=2$, $A=3$,  $F_1(t,v)=F_2(t,v)=\frac{1}{4}\arctan(v)(5-\arctan(v)$ and  initial conditions  $(x_1(0),\dot{x}_1(0),  x_2(0),\dot{x}_2(0))=(0,10,15,0)$.}\label{fig:resonance}
\end{figure}
Consider    equation \eqref{eqcrawl} with  $n=2$,  $m_1=m_2=1$, $\Ash= k$,   $L_1(t)=A\cos(\sqrt{2k} t),\,\,A>0$, and $F_i(t, v)$   satisfying   \ref{hyp:A1},  \ref{hyp:A2}, \ref{hyp:A3},  \ref{hyp:Dfric}    and  such that 
\begin{equation}\label{eq:bound}
 \abs{F_1(t,v_1)}+\abs{F_2(t,v_2)}\leq\alpha \in (0, kA),\quad (t,v_1,v_2)\in\R\times \R\times\R
\end{equation}
The resulting system is
\begin{equation}\label{eq:example2}
	\begin{cases}
		\ddot{x}_1=-F_1(t,\dot{x}_1)+k(x_2-x_1)-kA\cos (\sqrt{2k}t)\\
\ddot{x}_2=-F_2(t,\dot{x}_2)+k(x_1-x_2)+kA\cos(\sqrt{2k} t) 
	\end{cases}
\end{equation}
Let $(x_1(t), x_2(t)),\,\,t\in [0,+\infty)$   be any solution of system \eqref{eq:example2}
and let 
\begin{equation*}
p(t)\coloneqq -F_2(t,\dot{x}_2(t))+F_1(t,\dot{x}_1(t)).
\end{equation*}
The corresponding shape $z(t)=x_2(t)-x_1(t)$  is a solution of 
\begin{equation}\label{eq:res_osc}
    \ddot{w}+2kw=p(t)+2kA\cos(\sqrt{2k} t) 
\end{equation}
All solutions of \eqref{eq:res_osc}, and hence also $z(t)$,   
satisfy 
\begin{subequations}
\begin{align}
\limsup_{t\to +\infty} w(t)=+\infty\label{eq:unboundedplus}\\
\liminf_{t\to +\infty}  w(t)=-\infty\label{eq:unboundedminus}
\end{align}
\end{subequations}
In fact, any solution of \eqref{eq:res_osc} is of the form 
\begin{equation*}
w(t)=w_h(t)+\int_0^t \sin(\sqrt{2k}(t-s))[p(s)+2kA\cos(\sqrt{2k} s)]\,ds,
\end{equation*}
where   $w_h(t)=C_1\cos(\sqrt{2k} t)+C_2\sin(\sqrt{2k} t),\,\, C_1, C_2\in \R$. Since  
\begin{equation*}
\int_0^t \sin(\sqrt{2k}(t-s))\cos(\sqrt{2k} t)\,ds=\frac{t}{2}\sin(\sqrt{2k} t)
\end{equation*}
from \eqref{eq:bound} we have
\begin{equation*}
(-\alpha+ kA\sin(\sqrt{2k}t))t+w_h(t)\leq w(t)\leq (\alpha+ kA\sin(\sqrt{2k}t))t+w_h(t).
\end{equation*}
Then, if we define the sequence $t_h\coloneqq \frac{1}{\sqrt{2k}}(2h+1/2)\pi,\,\,h\in \mathbb{N}$, by the first   inequality   we get  immediately  
\begin{equation*}
\lim_{h\to +\infty}w(t_h)=+\infty
\end{equation*} 
and \eqref{eq:unboundedplus} follows.   In a similar manner, using the second  inequality with   $\tau_h\coloneqq t_h+\frac{\pi}{\sqrt{2k}},\,\,h\in \mathbb{N}$,  one gets \eqref{eq:unboundedminus}. From  this unbounded oscillatory character of solutions it follows also that the velocity  $v(t)=(\dot{x}_1(t), \dot{x}_2(t))$  is unbounded  on $[0,+\infty)$. 

An illustration of the behaviour of solutions of system \eqref{eq:example2} when \eqref{eq:bound}  holds is given in Figure~\ref{fig:resonance}, where we have simulated the system  choosing  $k=2$, $A=3$,  $F_1(t,v)=F_2(t,v)=\frac{1}{4}\arctan(v)(5-\arctan(v))$. In this case we can take  $\alpha=\frac{5}{4}\pi+\frac{\pi^2}{8}\in (0,6)$.
\end{example}

We notice that Example~\ref{ex:resonance} relies on an  internal resonance of the system.
If we eliminate the resonance by changing the period of the input, considering  for example $L_1(t)=\cos(t)$,  the  numerical simulations 
differ significantly and  suggest the possible existence of a global periodic attractor.

\begin{remark} \label{rem:displacement}
The Reader might be wondering why in Example~\ref{ex:resonance} we considered a sign-changing shape $z(t)$, which might seem to lead to a change in the order of the blocks.
Although mechanisms allowing such an inversion might be produced, a better explanation is available. While we refer for simplicity to $x_i$ as the position of the $i$-th block, as already noticed it should properly be considered as the \emph{displacement with respect to a} (constant) \emph{reference configuration} $\xi_i$. Hence, the \emph{actual position} of the block is $\xi_i+x_i$ and therefore the actual signed-distance between consecutive blocks is $\xi_{i+1}+x_{i+1}-\xi_i-x_i=z_i+(\xi_{i+1}-\xi_i)$, so that a negative $z_i$ does not necessarily mean the inversion of the blocks. Notice that any choice of the reference configuration leads to the very same dynamics \eqref{eqcrawl}, up to embedding the constant $\xi_{i+1}-\xi_i$ in $L_i(t)$, so our setting is not restrictive.

Clearly, these considerations allow the shape $z$ to fluctuate around zero, but do not exclude block-inversion for sufficiently large shape changes. However, in actual devices, we expect other phenomena, such as nonlinear  elastic forces, plasticity or damage in the links, to occur for deformations smaller than the ones required for block-inversion, so that the latter is not a limiting factor for the range of validity of the model.
\end{remark}

\section{Existence of a limit relative-periodic behaviour} \label{sec:bound}

Theorem~\ref{th:Massera_red}, like  all Massera-type theorems in general,  solves only half of the problem, leaving  open the question of whether a solution of \eqref{eq:reduced} bounded  on $[0,+\infty)$ exists. As we saw in Example~\ref{ex:resonance}, this may not be the case. 

We will prove the existence of such a bounded solution, thus establishing the existence of an unique limit relative-periodic behaviour, in two special situations.

The first case we consider  is that of time-dependent viscous forces, which we will analyse in detail in Section~\ref{sec:viscous}.  Our main result, proved in Section~\ref{sec:viscous}, is the following.
\begin{theorem}
\label{th:attractor_viscous}
	Let us assume that, in addition to  \ref{hyp:A4} and  \ref{hyp:A5} , friction forces  are of the form 
 \begin{equation}\label{eq:viscous}
F(t,v)=\diag(\mu_1(t),\cdots,\mu_n(t))\,v\coloneqq \V(t) \,v 
\end{equation}
where  $\mu_i\in L^\infty(\R, [0,+\infty))$  are   $T$-periodic functions. 
Then Equation~\eqref{eqcrawl} admits a unique, globally asymptotically stable, relative-periodic behaviour in the sense of Definition~\ref{def:asympt_gait} and the convergences \eqref{eq:conv_shape}--\eqref{eq:conv_vel} are exponential.
\end{theorem}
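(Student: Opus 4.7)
In the viscous setting the reduced system~\eqref{eq:reduced} becomes linear in $(z,v)$ with $T$-periodic coefficients and $T$-periodic inhomogeneity $\ell(t)$, namely
\begin{equation*}
\dot z = Pv,\qquad \M\dot v + \V(t) v + P^\top \Ash z = \ell(t).
\end{equation*}
My plan is therefore twofold. First, I would show that the associated \emph{homogeneous} system (set $\ell\equiv 0$) exhibits exponential decay of all solutions; combined with linearity, this immediately yields exponential convergence between any two solutions of the inhomogeneous system. Second, the same decay, together with boundedness of $\ell$, will produce a bounded solution of the inhomogeneous system on $[0,+\infty)$, so that Theorem~\ref{th:Massera_red} and Corollary~\ref{cor:Massera_loc} deliver the unique, globally asymptotically stable, relative $T$-periodic behaviour.

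To establish exponential decay of the homogeneous system I would use a perturbed energy Lyapunov function
\begin{equation*}
V_\epsilon(z,v) \coloneqq \frac{1}{2}\|v\|_{\M}^2 + \frac{1}{2}\|z\|_{\Ash}^2 + \epsilon\,\langle \Ash z, Pv\rangle,
\end{equation*}
with $\epsilon>0$ small. By Cauchy--Schwarz and the equivalence of the Euclidean, $\M$- and $\Ash$-norms, $V_\epsilon$ is positive definite and equivalent to $\|z\|^2+\|v\|^2$ for $\epsilon$ sufficiently small. Differentiating $V_\epsilon$ along the homogeneous system, using $\dot z = Pv$ and $\M\dot v = -\V(t)v - P^\top\Ash z$, the cross contributions from $\langle \M v,\dot v\rangle$ and $\langle \Ash z,\dot z\rangle$ cancel exactly, leaving
\begin{equation*}
\dot V_\epsilon = -\langle \V(t)v,v\rangle + \epsilon\|Pv\|_{\Ash}^2 - \epsilon\|P^\top\Ash z\|_{\M^{-1}}^2 - \epsilon\,\langle P^\top\Ash z,\M^{-1}\V(t)v\rangle.
\end{equation*}
Assumption~\ref{hyp:A4} specialized to the viscous form gives $\langle \V(t)v,v\rangle \geq \hat\mu\|v\|^2$, which for small $\epsilon$ absorbs both $\epsilon\|Pv\|_{\Ash}^2$ and (via Young's inequality) the mixed term. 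Crucially, since $\Ash$ is invertible and $P^\top$ has full column rank, the map $P^\top\Ash\colon\R^{n-1}\to\R^n$ is injective, hence $\|P^\top\Ash z\|_{\M^{-1}}^2$ is equivalent to $\|z\|^2$. For $\epsilon$ sufficiently small I thus obtain $\dot V_\epsilon \leq -\gamma V_\epsilon$ for some $\gamma>0$.

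Reinstating the inhomogeneous term $\ell(t)$ introduces the extra contributions $\langle v,\ell(t)\rangle$ and $\epsilon\langle P^\top\Ash z,\M^{-1}\ell(t)\rangle$, which by $\ell\in L^\infty$ and Young's inequality are dominated by $\tfrac{\gamma}{2}V_\epsilon + C$ for some constant $C>0$. This yields $\dot V_\epsilon \leq -\tfrac{\gamma}{2}V_\epsilon + C$, so $V_\epsilon$ stays bounded on $[0,+\infty)$, supplying the bounded solution required by Theorem~\ref{th:Massera_red}. Corollary~\ref{cor:Massera_loc} then provides the unique globally asymptotically stable relative-periodic behaviour, and the exponential rate in \eqref{eq:conv_shape}--\eqref{eq:conv_vel} follows by applying the homogeneous exponential decay of $V_\epsilon$ to the difference of any solution with the relative-periodic one, which satisfies the homogeneous reduced system. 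The main obstacle, addressed precisely by the cross term $\epsilon\langle \Ash z, Pv\rangle$, is that friction dissipates only velocity: the naive energy $\tfrac{1}{2}\|v\|_{\M}^2+\tfrac{1}{2}\|z\|_{\Ash}^2$ has $\dot E = -\langle \V(t)v,v\rangle$, which is merely negative semidefinite and fails to control the shape; the cross term transfers dissipation to $z$ through the elastic coupling $P^\top\Ash$, and its effectiveness ultimately rests on the injectivity of $P^\top\Ash$.
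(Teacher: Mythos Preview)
Your argument is correct and complete, but it takes a genuinely different route from the paper. The paper does not build a Lyapunov function; instead it observes that, in the viscous case, the difference $d$ of any two solutions of the reduced system satisfies the linear $T$-periodic homogeneous system $\dot d=\widetilde A(t)d$, and then invokes the already-proven Corollary~\ref{cor:contrazione_lip} (the global-Lipschitz version of the contraction theorem) to conclude $d(t)\to 0$. Since a linear $T$-periodic system all of whose solutions tend to zero must have all Floquet multipliers inside the unit disk, exponential decay $\norm{d(t)}\leq K e^{-\alpha t}\norm{d(0)}$ follows from standard Floquet theory; boundedness of all solutions of the inhomogeneous system, and then Corollary~\ref{cor:Massera_loc}, conclude the argument.

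So the paper's proof is short precisely because it recycles the nonlinear contraction machinery of Section~\ref{sec:massera} and then appeals to Floquet theory for the exponential upgrade. Your approach is more direct and self-contained: the perturbed energy $V_\epsilon$ with the cross term $\epsilon\langle \Ash z, Pv\rangle$ gives $\dot V_\epsilon\leq -\gamma V_\epsilon$ in one stroke, bypassing both Corollary~\ref{cor:contrazione_lip} and Floquet theory, and yielding in principle an explicit decay rate in terms of $\hat\mu$, $\Ash$, $\M$ and $\norm{\V}_{L^\infty}$. The key observation you identify---that injectivity of $P^\top\Ash$ is what makes the cross term transfer dissipation to the shape variable---is exactly the structural point underlying the paper's route as well (it appears there inside the proof of Theorem~\ref{th:contrazione}), but you exploit it more explicitly. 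Either approach is fine; yours is arguably more elementary, the paper's is more economical given what has already been established.
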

 Notice that \ref{hyp:A1} \ref{hyp:A2}, \ref{hyp:A3} are automatically satisfied by \eqref{eq:viscous}.
Moreover, under \eqref{eq:viscous}, condition \ref{hyp:A4} is equivalent to assume the existence of $\hat \mu>0$ such that $\mu_i(t)\geq \hat \mu$  on  $[0,T]$.
We remark that, for viscous friction, the dynamics of the system becomes linear and an explicit form of the solutions is available, which allows us to recover additional information on the system. In particular, in Proposition~\ref{prop:incomp}, we show that if the friction matrix $\V$ is constant in time, then the geometric phase of the corresponding relative-periodic solution is zero. In other words, in its relative-periodic limit the crawler becomes incompetent, with each iteration of the gait returning the crawler to its initial state, namely $x(t+T)=x(t)$. Notice that a time-dependent viscous friction, although not fitting most concrete examples, is a key feature in the locomotion of microscale hydrogel crawlers \cite{KropacekAl,RehorAl21}. 

\medskip

Outside of the (linear) viscous case, the existence of a bounded solution is a much more challenging issue. We prove it for a special case, in which we assume that all the blocks have the same mass $\bar{m}$ and the same friction law $F_i=f$, and that the body is sufficiently stiff. More precisely, we make the following additional assumption:
\begin{enumerate}[label=\textup{(A\arabic*)},start=6]
	\item $m_1=m_2=\cdots m_n\eqqcolon \bar{m}$ and $F_1=F_2=\dots=F_n\eqqcolon f$, where $f(t,u)$ is $C_f$-Lipschitz continuous in $u$ uniformly in $t$, and such that the Lipschitz constant $C_f$ satisfies 
\begin{equation*}	
	C_f<4k(1-\cos\frac{\pi}{n-1})
\end{equation*}
	where $k>0$ denotes the minimum eigenvalue of $\Ash$. \label{hyp:stiffbody}
\end{enumerate}
We have the following result.
\begin{theorem}\label{th:attractor_stiff}
Let us assume that, in addition to the assumptions of Section~\ref{sec:model}, system~\eqref{eqcrawl} satisfies also \ref{hyp:stiffbody}. 	Then Equation~\eqref{eqcrawl} admits a unique, globally asymptotically stable, relative-periodic behaviour in the sense of Definition~\ref{def:asympt_gait}.
\end{theorem}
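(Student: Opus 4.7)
The strategy is to reduce the theorem to Theorem~\ref{th:Massera_red} (and hence Corollary~\ref{cor:Massera_loc}) by exhibiting a single bounded solution of the reduced system~\eqref{eq:reduced}. Under \ref{hyp:stiffbody} the friction $F$ is globally Lipschitz with constant $C_f$, so Corollary~\ref{cor:contrazione_lip} is in force and any two solutions of \eqref{eq:reduced} are asymptotic in both shape and velocity; hence, producing one bounded orbit automatically yields point dissipativity and, via Theorem~\ref{th:Massera_red}, the unique, globally asymptotically stable periodic attractor of the reduced dynamics, whose lift through Corollary~\ref{cor:Massera_loc} is the desired relative-periodic behaviour for \eqref{eqcrawl}. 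The entire work is therefore the construction of one bounded solution under \ref{hyp:stiffbody}.

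I would produce this bounded orbit through a Lyapunov-type estimate on \eqref{eq:reduced}. The natural candidate is the augmented mechanical energy
\begin{equation*}
V(z,v)=\tfrac{1}{2}\norm{v}_{\M}^{2}+\tfrac{1}{2}\norm{z}_{\Ash}^{2}+\epsilon\,\langle \M v,\,P^\top z\rangle,
\end{equation*}
with $\epsilon>0$ small. The first two terms are the classical kinetic-plus-elastic energy and dissipate in the velocity component through \ref{hyp:A4} but not in the shape; the cross-term is the standard correction used in damped mechanical systems to transfer dissipation from the velocity into the shape direction. Its derivative along \eqref{eq:reduced} contributes $-\epsilon\,\langle\Ash z, PP^\top z\rangle+\epsilon\,\bar m\,\norm{Pv}^{2}-\epsilon\,\langle F(t,v), P^\top z\rangle+\epsilon\,\langle \ell(t),P^\top z\rangle$. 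The equal-masses assumption $\M=\bar m\,\Id_n$ is crucial here because it makes $\M$ commute with every operator derived from the spectral decomposition of $\A=P^\top\Ash P$, enabling a mode-by-mode analysis on the $(n-1)$-dimensional shape subspace; equal friction laws, in turn, ensure that the friction contribution across all modes is uniformly controlled by the single constant $C_f$.

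Summing the contributions and applying \ref{hyp:A4} to the principal dissipative term, the Lipschitz bound $\norm{F(t,v)}\leq C_f\norm{v}$ (a consequence of \ref{hyp:A3} and \ref{hyp:stiffbody}) to the friction cross-term, and Young's inequality to the forcing terms generated by $\ell\in L^{\infty}$, one arrives at an estimate of the form
\begin{equation*}
\dot V\leq -\alpha_{1}\norm{v}^{2}-\alpha_{2}\norm{z}^{2}+\beta
\end{equation*}
with explicit positive constants $\alpha_{1},\alpha_{2},\beta$ provided that the net coefficient of $\norm{z}^{2}$ remains positive after Young absorption. Performing this absorption eigenmode by eigenmode produces exactly the threshold $C_f<4k(1-\cos\frac{\pi}{n-1})$ of \ref{hyp:stiffbody}, with $k=\lambda_{\min}(\Ash)$. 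Since $V$ is coercive in $(z,v)$ for $\epsilon$ chosen sufficiently small, combining $\dot V\leq -\alpha V+\beta$ with Gronwall's inequality yields $V(t)\leq V(0)e^{-\alpha t}+\beta/\alpha$, so that $\norm{z(t)}+\norm{v(t)}$ is uniformly bounded on $[0,+\infty)$, providing the bounded solution required by Theorem~\ref{th:Massera_red}.

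The principal obstacle is the spectral step: identifying the cross-term coefficient $\epsilon$ and the Young parameters so that the resulting quadratic form in $(z,v)$ is negative definite at the sharp stiffness threshold. This requires working in the eigenbasis of the shape operator, an analysis that is available only because equal masses and equal friction laws together make $\M$ commute with the relevant spectral projectors and keep the friction contribution uniformly bounded across modes; dropping either symmetry would destroy the clean mode-by-mode decoupling on which the stiffness bound hinges.
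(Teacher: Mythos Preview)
Your strategy is correct and matches the paper's: reduce to Corollary~\ref{cor:Massera_loc} by exhibiting a bounded orbit of~\eqref{eq:reduced}, and obtain that orbit via a skewed-energy Lyapunov function whose cross term transfers friction dissipation into the shape direction, with the stiffness threshold emerging from the quadratic balance. The only real difference is organizational. The paper carries out the Lyapunov argument on the $(2n-2)$-dimensional shape phase space, using $\VV(z,\dot z)=\tfrac12\norm{\dot z}^2+\tfrac12\langle\B z,z\rangle+\tfrac{\delta}{k}\langle z,\dot z\rangle$ to trap $(z,\dot z)$ in a compact set, and only afterwards bounds the barycentre velocity $\dot{\bar x}$ by a separate scalar argument on the first cardinal equation~\eqref{eq:1card_bis}; you instead propose a single Lyapunov function on the full reduced state $(z,v)\in\R^{2n-1}$, letting the friction term $-\hat\mu\norm{v}^2$ absorb the barycentre direction in one stroke. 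Both routes lead to the same cross-term estimate $\abs{\langle PF(t,v),z\rangle}\leq C_f\norm{Pv}\norm{z}$ (this is where equal friction laws are essential) and to the same competition between $C_f$ and the smallest eigenvalue of $PP^\top\Ash$. The paper's two-step version makes it more visible that~\ref{hyp:stiffbody} enters only in the shape estimate, while your one-step version is more streamlined but requires slightly more care in checking coercivity and negativity simultaneously in the extra barycentric direction.
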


When dealing with systems that include  a dissipation mechanism,  the existence of \emph{one} solution which is bounded forward in time, as required by Massera-type theorems,  can be established by demonstrating the system's point dissipativity, i.e. the existence of a compact global attractor. As  previously  mentioned, in our setting this approach  is not restrictive, since by Theorem~\ref{th:contrazione} the existence  of such an attractor follows from the  existence of one  bounded orbit.

The following lemma gives a sufficient condition for the point dissipativity of system system~\eqref{eqcrawl}.
\begin{lemma} \label{lemma:stiff_dissip} Let us assume that system~\eqref{eqcrawl} satisfies all the assumptions of Section~\ref{sec:model} and \ref{hyp:stiffbody}. Then  there exists a compact set $K\subset \R^{2n-1}$ such that for every initial condition 
	\begin{equation}\label{eq:initialvalue}
		(z(0),v(0))=(z_0,v_0)\in \R^{2n-1}
	\end{equation}	
	there exists a time $t_0=t_0(z_0,v_0)$ such that the corresponding solution of \eqref{eq:reduced} satisfies $(z(t),v(t))\in K$ for every $t\geq t_0$.
\end{lemma}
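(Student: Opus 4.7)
The plan is to construct a Lyapunov function for the reduced dynamics satisfying a dissipative inequality $\dot V \le -\gamma V + C$, from which a compact absorbing set follows. I first exploit the equal masses from \ref{hyp:stiffbody} via a Galilean decomposition: write $v = \bar v\mathds{1}_n + u$ with $\bar v := \tfrac{1}{n}\mathds{1}_n^\top v$ and $u \in \R_0^n := \{w\in\R^n : \mathds{1}_n^\top w = 0\}$, and lift $z \in \R^{n-1}$ to the unique $y \in \R_0^n$ with $Py = z$ (since $P|_{\R_0^n}$ is an isomorphism). Setting $Q := I_n - \tfrac{1}{n}\mathds{1}_n\mathds{1}_n^\top$ and $\bar f := \tfrac{1}{n}\mathds{1}_n^\top F$, system~\eqref{eq:reduced} takes the form
\[
\dot y = u, \qquad \bar m \dot u + QF(t, u + \bar v\mathds{1}_n) + \A y = \ell(t), \qquad \bar m \dot{\bar v} + \bar f(t, u + \bar v\mathds{1}_n) = 0.
\]
The shape-velocity pair $(y, u)$ couples to $\bar v$ only through $QF$. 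Exploiting $F_i = f$, the identity $QF(t, v) = Q[F(t, v) - F(t, \bar v\mathds{1}_n)]$ together with the $C_f$-Lipschitz property of $f$ yields $\|QF\| \le C_f\|u\|$, while the componentwise monotonicity from \ref{hyp:A4} yields $\langle u, QF\rangle \ge \hat\mu\|u\|^2$; crucially, both bounds are \emph{independent} of $\bar v$.

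Next, I introduce, for a small parameter $\eta > 0$ to be tuned,
\[
V_\eta := \tfrac{\bar m}{2}\|u\|^2 + \tfrac{1}{2}\|y\|_\A^2 + \eta\bar m\langle u, y\rangle,
\]
coercive and equivalent to $\|u\|^2 + \|y\|_\A^2$ on $\R_0^n\times\R_0^n$ for $\eta$ small. Differentiating along the dynamics, the elastic terms $\langle u, \A y\rangle$ and $\langle y, \A u\rangle$ cancel by symmetry of $\A$, while the $\eta$-term produces the decisive $-\eta\|y\|_\A^2$ dissipation. Applying Young's inequality to the forcing and cross terms together with the Poincaré-type bound $\|y\| \le \|y\|_\A/\sqrt{\lambda_1}$, where $\lambda_1 := \lambda_{\min}(\A|_{\R_0^n})$ is controlled from below by $k$ and the geometry of $P^\top P$ on $\R_0^n$, I will obtain an estimate of the form
\[
\dot V_\eta \le -\Bigl(\hat\mu - \eta\bar m - \tfrac{\eta C_f^2}{\lambda_1} - \delta\Bigr)\|u\|^2 - \tfrac{\eta}{2}\|y\|_\A^2 + C_0,
\]
with $\delta$ arbitrarily small. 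The stiffness hypothesis \ref{hyp:stiffbody} controls $C_f/k$ precisely so that, choosing $\eta > 0$ sufficiently small, both coefficients stay strictly positive; Gronwall then delivers an eventual bound $\|u(t)\| + \|y(t)\|_\A \le U^*$.

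To close the loop on the mean velocity, I differentiate $|\bar v|$ along its scalar ODE:
\[
\bar m \tfrac{d}{dt}|\bar v| = -\bar f\sign(\bar v) \le -f(t, \bar v)\sign(\bar v) + |\bar f - f(t, \bar v)| \le -\hat\mu|\bar v| + \tfrac{C_f}{\sqrt n}\|u\|,
\]
using \ref{hyp:A4} applied to $f$ (with $w = 0$) for the dissipation term and the Lipschitz estimate $|\bar f - f(t, \bar v)| \le \tfrac{1}{n}\sum_i|f(t, u_i + \bar v) - f(t, \bar v)| \le \tfrac{C_f}{\sqrt n}\|u\|$ for the perturbation. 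Combined with the already-established bound on $\|u\|$, linear scalar comparison yields $|\bar v(t)|$ eventually bounded; reassembling $v = u + \bar v\mathds{1}_n$ and $z = Py$ then produces the compact set $K\subset\R^{2n-1}$.

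The main difficulty is the tuning of $\eta$ in the second step: the Young bound on $\eta\langle QF, y\rangle$ generates a $\tfrac{\eta C_f^2}{\lambda_1}\|u\|^2$ contribution that must be absorbed by $\hat\mu\|u\|^2$ alongside the $\eta\bar m\|u\|^2$ coming from the cross term, and whether this is feasible depends precisely on the ratio of $C_f$, $k$, and $\hat\mu$ that \ref{hyp:stiffbody} is designed to control. A minor technicality is the low regularity $L \in L^\infty$, which I sidestep by carrying $\ell(t) = P^\top\Ash L(t)$ through as bounded forcing in the $\dot u$ equation rather than integrating by parts to introduce $\dot L$.
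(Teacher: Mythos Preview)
Your argument follows the same two-step strategy as the paper: a skewed-energy Lyapunov function on the shape variables, then a scalar comparison for the mean velocity. The paper works directly in $(z,\dot z)\in\R^{n-1}\times\R^{n-1}$ with $\VV=\tfrac12\|\dot z\|^2+\tfrac12\langle\B z,z\rangle+\tfrac{\delta}{k}\langle z,\dot z\rangle$, while you lift to $(y,u)\in\R_0^n\times\R_0^n$; the computations are otherwise parallel, and your treatment of the mean velocity via $\bar m\tfrac{d}{dt}|\bar v|\le-\hat\mu|\bar v|+\tfrac{C_f}{\sqrt n}\|u\|$ is a clean variant of the paper's argument that $\tfrac{d|u|}{dt}<-\tfrac{\hat\mu}{\bar m}$ once $|u|$ exceeds the bound on the $\dot\alpha_i$.

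One point deserves correction: your claim that \ref{hyp:stiffbody} is what makes the tuning of $\eta$ feasible is mistaken. In your own displayed inequality, every competing term in the $\|u\|^2$ coefficient carries a factor of $\eta$ or $\delta$, so $\hat\mu-\eta\bar m-\tfrac{\eta C_f^2}{\lambda_1}-\delta>0$ for all sufficiently small $\eta,\delta>0$ \emph{regardless} of the ratio $C_f/k$, while the $\|y\|_\A^2$ coefficient $\tfrac{\eta}{2}$ is trivially positive. The paper, by contrast, bounds the analogous cross term $\tfrac{\delta}{k}\langle z,\Psi(t,\dot x)\rangle$ with the \emph{symmetric} split $\|z\|\|\dot z\|\le\tfrac12(\|z\|^2+\|\dot z\|^2)$, producing a contribution $\tfrac{\delta C_f}{2k\bar m}\|z\|^2$ that scales with $\delta$ exactly like the only available damping $-\tfrac{\delta}{k}\lambda_{\min}\|z\|^2$; no choice of $\delta$ helps, and one is forced to assume $C_f<2\bar m\lambda_{\min}$, which is precisely the stiffness bound in \ref{hyp:stiffbody}. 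Your asymmetric Young split pushes the entire load onto $\|u\|^2$, where $\hat\mu$ is available to absorb it, so your argument in fact establishes the lemma under the weaker hypothesis of equal masses, equal friction laws and a uniform Lipschitz constant for $f$---the explicit inequality $C_f<4k(1-\cos\tfrac{\pi}{n-1})$ is not needed.
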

The proof of Lemma~\ref{lemma:stiff_dissip} is given in Section~\ref{sec:stiff}. 

\begin{proof}[Proof of Theorem~\ref{th:attractor_stiff}]
	By Lemma~\ref{lemma:stiff_dissip} we deduce that each solution of \eqref{eq:reduced} is bounded forward in time, hence the Theorem follows straightforwardly by Corollary~\ref{cor:Massera_loc}.
\end{proof}


\section{Time-dependent viscous friction} \label{sec:viscous}

In this section  we consider the case of viscous friction as defined in \eqref{eq:viscous}; we first demonstrate Theorem~\ref{th:attractor_viscous} and then discuss some further properties of this special case.
 
\begin{proof}[Proof of Theorem~\ref{th:attractor_viscous}]

We begin by noticing that, in this setting,  if we let $\eta=(z,v)$,   system   \eqref{eq:reduced} is the linear system    
\begin{equation}\label{eq:linear} 
\dot{\eta}=\widetilde A(t)\eta+ \widetilde B(t), \qquad \text{a.e. $t\in[0, +\infty)$}
\end{equation}
where
\begin{align*}
\widetilde A(t)=
	\begin{bmatrix} 
	 0_{(n-1)\times (n-1)} & P\\[2mm]
	 -\mathbb{M}^{-1} P^\top \Ash & -\mathbb{M}^{-1}\V(t)
	\end{bmatrix} &&  
\widetilde B(t)= \begin{bmatrix} 
	0_{(n-1)\times 1}\\[2mm]
	\mathbb{M}^{-1}\ell(t)
	\end{bmatrix}
\end{align*}
We therefore have to prove the existence of a unique $T$-periodic solution $\eta_*=(z_*, v_*)$ of  \eqref{eq:linear} and that such $\eta^*$ is a global attractor of the dynamics.

Let $\eta_a=(z_a,v_a)$ and $\eta_b=(z_b,v_b)$ be two solutions of \eqref{eq:linear} and set $d=\eta_a-\eta_b$. Thus, $d$ is a solution of the homogeneous $T$-periodic system
\begin{equation} \label{eq:linear_homogeneous}
	\dot d=\widetilde{A}(t)d\qquad \text{a.e. $t\in[0, +\infty)$}.
\end{equation}
 By Corollary~\ref{cor:contrazione_lip} we know that $d(t)\to 0$ as $t\to +\infty$; therefore by \cite[Theorem 7.1]{Hale80} all characteristic multipliers of \eqref{eq:linear_homogeneous}  have modulus less than one   (and correspondingly all characteristic exponents have negative real part) and, moreover, there exists $K>0$, $\alpha>0$ such that 
 \begin{equation*}
  \norm{d(t)}\leq \norm{d(0)}Ke^{-\alpha t}\,,\qquad t\geq 0.    
 \end{equation*}
 
 Thus, all the solutions of system \eqref{eq:linear} are  bounded  on $[0,+\infty)$.
 The Theorem follows by Corollary~\ref{cor:Massera_loc}.
\end{proof}

In the linear setting, we   can say something more specific about the asymptotic dynamics.
Let us first introduce the center of mass of the system as $\bar x=\frac{1}{M}\sum_{i=1}^{n}m_ix_i$, where $M=\sum_{i=1}^n m_i$ is the total mass of the system.

It is well known that $M\ddot{\bar x}=F^\mathrm{ext}$, where $F^\mathrm{ext}$ is the sum of the external forces acting on the system. In our case, since elastic forces are all internal, $F^\mathrm{ext}$ is the sum of the friction forces acting on each block,  and  therefore
\begin{equation} \label{eq:1card}
	\ddot{\bar x}(t)=-\frac{1}{M}\sum_{i=1}^n \mu_i(t)\dot x_i(t) \qquad \text{a.e. $t\in[0, +\infty)$}.
\end{equation}
Let us write $\alpha_i(t)=x_i(t)-\bar x(t)$ and notice that 
\begin{equation}\label{eq:baric_decomp}
\alpha_i=\frac{1}{M}\sum_{j=1}^n m_j(x_i-x_j) 
\end{equation}
Since, for every $i>j$, the difference $x_i-x_j$ can be expressed as the sum of the $z_h$ associated to all the links between the $i$-th and the $j$-th blocks, it follows that each function $\alpha_i(t)$ is a linear function of the shape $z(t)$; in particular, there exists a (constant) matrix $Q\in\R^{n\times(n-1) }$ such that $\alpha(t)\coloneqq (\alpha_1(t),\dots,\alpha_n(t))^\top=Q z(t)$.

Writing $u=\dot{\bar x}$,   can reformulate \eqref{eq:1card} as
\begin{equation}
	\dot u(t)=-\frac{1}{M}\sum_{i=1}^n \mu_i(t)[u +\dot \alpha_i(t)]=
	-r(t)u(t)+b(t)  \qquad \text{a.e. $t\in[0, +\infty)$}
\end{equation}
where, recalling that $\mathds{1}_n=(1,1,\dots,1)^\top\in \R^{n\times 1}$, we set
\begin{align*}
r(t)=\frac{1}{M}\sum_{i=1}^n \mu_i(t)>\frac{n\hat\mu}{M} && b(t)=-\frac{1}{M}\mathds{1}_n^\top\V(t)Q\dot z(t) \,.
\end{align*}

The solutions of this equation have the form
\begin{equation}\label{eq:linearsol}
	u(t)=e^{-\int_{0}^{t}r(s)\dd s}u(0)+\int_{0}^{t}e^{-\int_{s}^{t}r(\tau)\dd \tau}b(s)\dd s
\end{equation}

We now focus on the special case of a constant friction matrix $\V$, for which it holds $r(t)=\frac{1}{M}\tr \V\eqqcolon r>\frac{n\hat\mu}{M}$ .

\begin{prop} \label{prop:incomp}
If $\V$ does not depend on time, then the relative-periodic solutions $x^*$ of the dynamics \eqref{eqcrawl} satisfy $\int_0^T\dot{\bar x}^*(t)\dd t=0$. In particular, by Theorem~\ref{th:attractor_viscous} it follows that any generic solution of \eqref{eqcrawl} with center of mass $\bar x(t)$ satisfies
\begin{equation*}
\lim_{t\to+\infty} \frac{\bar x(t)-\bar x(0)}{t}=0
\end{equation*}
that is, the crawler is asymptotically incompetent. 
\end{prop}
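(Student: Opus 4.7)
The plan is to exploit the fact that when $\V$ is constant, the forcing term $b(t)$ in the linear ODE governing $u(t)=\dot{\bar{x}}(t)$ becomes an exact time-derivative, and then use the $T$-periodicity of $v^*$ guaranteed by the relative-periodicity of $x^*$.

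First, I would note that for constant $\V$, the expression
\begin{equation*}
b(t)=-\frac{1}{M}\mathds{1}_n^\top \V Q\,\dot z(t)=\frac{\dd}{\dd t}B(t),\qquad B(t)\coloneqq -\frac{1}{M}\mathds{1}_n^\top \V Q\, z(t),
\end{equation*}
so the forcing admits an antiderivative that depends on $z$ \emph{linearly}. Then I would apply this to a relative-periodic solution $x^*$: by Definition~\ref{def:relativeperiodic}, the shape $z^*=Px^*$ and the velocity $v^*=\dot x^*$ are $T$-periodic. Consequently $u^*=\dot{\bar x}^*=\frac{1}{M}\sum_i m_i v_i^*$ is $T$-periodic as well, and $B(t)$ evaluated along $z^*$ is $T$-periodic, so $\int_0^T b(t)\,\dd t=B(T)-B(0)=0$.

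Next I would integrate the scalar linear equation $\dot u^*=-r\, u^*+b(t)$ over a period:
\begin{equation*}
0=u^*(T)-u^*(0)=-r\int_0^T u^*(t)\,\dd t+\int_0^T b(t)\,\dd t=-r\int_0^T u^*(t)\,\dd t.
\end{equation*}
Since $r=\frac{1}{M}\tr\V>0$, this yields $\int_0^T \dot{\bar x}^*(t)\,\dd t=0$, which is the first claim.

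For the asymptotic statement, I would combine this with Theorem~\ref{th:attractor_viscous}. Let $x$ be any solution of \eqref{eqcrawl} and let $x^*$ be the (unique, up to $G$-symmetry) relative-periodic solution; the convergence in \eqref{eq:conv_vel} is exponential, so $u(t)-u^*(t)\to 0$ exponentially and the integral $\int_0^\infty[u(s)-u^*(s)]\,\dd s$ converges absolutely, giving a bounded contribution. On the other hand, writing $t=NT+\tau$ with $N\in\mathbb{N}$ and $\tau\in[0,T)$, the $T$-periodicity and zero-mean of $u^*$ give
\begin{equation*}
\int_0^t u^*(s)\,\dd s=N\int_0^T u^*(s)\,\dd s+\int_0^\tau u^*(s)\,\dd s=\int_0^\tau u^*(s)\,\dd s,
\end{equation*}
which stays bounded as $t\to+\infty$. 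Therefore $\bar x(t)-\bar x(0)=\int_0^t u(s)\,\dd s$ is bounded, and dividing by $t$ gives the desired limit. The main point to handle carefully is simply the observation that constancy of $\V$ is what allows $b$ to be recognized as a total derivative (for time-dependent $\V$ this argument fails, consistently with the fact that the conclusion is not expected in that generality); the rest is bookkeeping with the linear ODE and the exponential decay supplied by Theorem~\ref{th:attractor_viscous}.
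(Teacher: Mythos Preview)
Your argument is correct, and it differs from the paper's in both halves.

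For the zero-mean claim, the paper treats the scalar equation $\dot u=-\frac{1}{M}\sum_i\mu_i[u+\dot\alpha_i^*(t)]$ as the dynamics of a crawler with prescribed shape under constant viscous friction and invokes an external result (\cite[Example~3.4]{GiMaRe}) asserting that its unique $T$-periodic solution has zero average. Your route is more self-contained: you exploit directly that, when $\V$ is constant, the forcing $b(t)$ is an exact time-derivative of a function of $z^*$, so that integrating $\dot u^*=-r\,u^*+b$ over a period and using the $T$-periodicity of $u^*$ and $z^*$ forces $\int_0^T u^*=0$ since $r>0$. This avoids the external reference entirely.

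For the asymptotic statement, the paper uses only the plain convergence $\dot x(t)-\dot x^*(t)\to 0$, deduces that the increments $\bar x(jT)-\bar x((j-1)T)$ tend to zero, and then appeals to a Ces\`aro-mean argument to get the sublinear growth of $\bar x(t)-\bar x(0)$. You instead use the \emph{exponential} rate in Theorem~\ref{th:attractor_viscous} to show that $\int_0^\infty|u-u^*|$ is finite; combined with the boundedness of $\int_0^t u^*$, this yields the stronger conclusion that $\bar x(t)-\bar x(0)$ is actually bounded, from which the limit is immediate. Your argument buys a sharper statement at the cost of relying on the exponential rate, while the paper's argument is more robust (it would survive with merely $u-u^*\to 0$) but gives only the stated sublinear bound.
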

\begin{proof}
By Theorem~\ref{th:attractor_viscous}, all relative-periodic solutions have the same shape $z^*(t)$ and the same velocities $v^*(t)$, so in particular they have the same velocity of the center of mass $\dot{\bar x}^*(t)\eqqcolon u^*(t)$ and both $u^*(t)$  and $z^*(t)$ are  $T$-periodic. Then, writing $\alpha^*\coloneqq Qz^*$, $u^*(t)$ satisfies  
the differential equation
\begin{equation}
	\dot v(t)=-\frac{1}{M}\sum_{i=1}^n \mu_i[v +\dot \alpha_i^*(t)] \qquad \text{a.e. $t\in[0, +\infty)$}
\end{equation}
which can be thought of as corresponding to  a locomotor with periodically prescribed shape subject to constant viscous friction. For such locomotors it was proved in  \cite[Example 3.4]{GiMaRe}  that there exists a unique globally attracting $T$-periodic 
solution $v^*$ which has zero average on the period. By uniqueness $v^*=u^*$   and we conclude that $\int_{0}^{T}u^*(t)\dd t=0 $. This proves the first part of the statement.

As a consequence, for a generic solution, the sequence of differences $\bar x(jT)-\bar x ((j-1)T)$ converges to zero as $j\to+\infty$. Since the arithmetic mean of the first $j$ terms of a converging sequence converges to the limit of the sequence  as $j\to+\infty$, the second part of the statement easily follows.
\end{proof}

The incompetence of a discrete model of crawler with constant viscous friction was already observed in \cite{DeSTat12} in the quasistatic case with prescribed shape, and later in \cite{GiMaRe} in the dynamic case with prescribed shape. Different improvements can be applied to the viscous case in order to attain true locomotion capability, such as a continuous body \cite{DeSTat12} or considering time-dependent viscous forces \cite[Example~3.5]{GiMaRe}.
 In our framework, where shape is not prescribed and the elastic energy is considered,  we provide the following example of an effective discrete model of crawler employing time-dependent viscous friction to achieve a nonzero limit average velocity. 

\begin{figure}[t]
	\centering
	\subcaptionbox{Shape $z(t).$\label{subfig:shape}}
	{\includegraphics[scale=0.2]{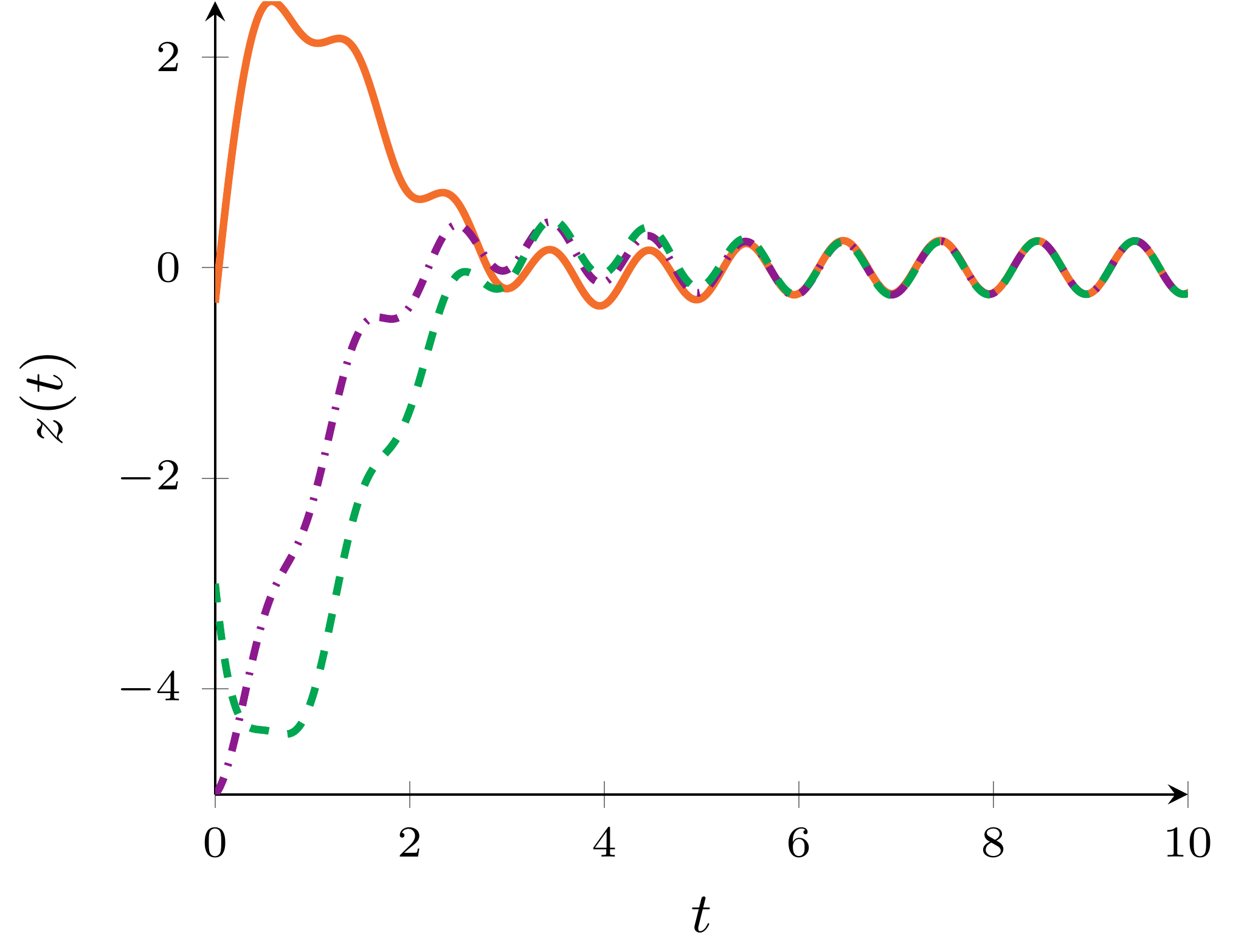}}\qquad
	\subcaptionbox{Position $\bar x(t)$ of the barycenter. \label{subfig:pos}}
	{\includegraphics[scale=0.2]{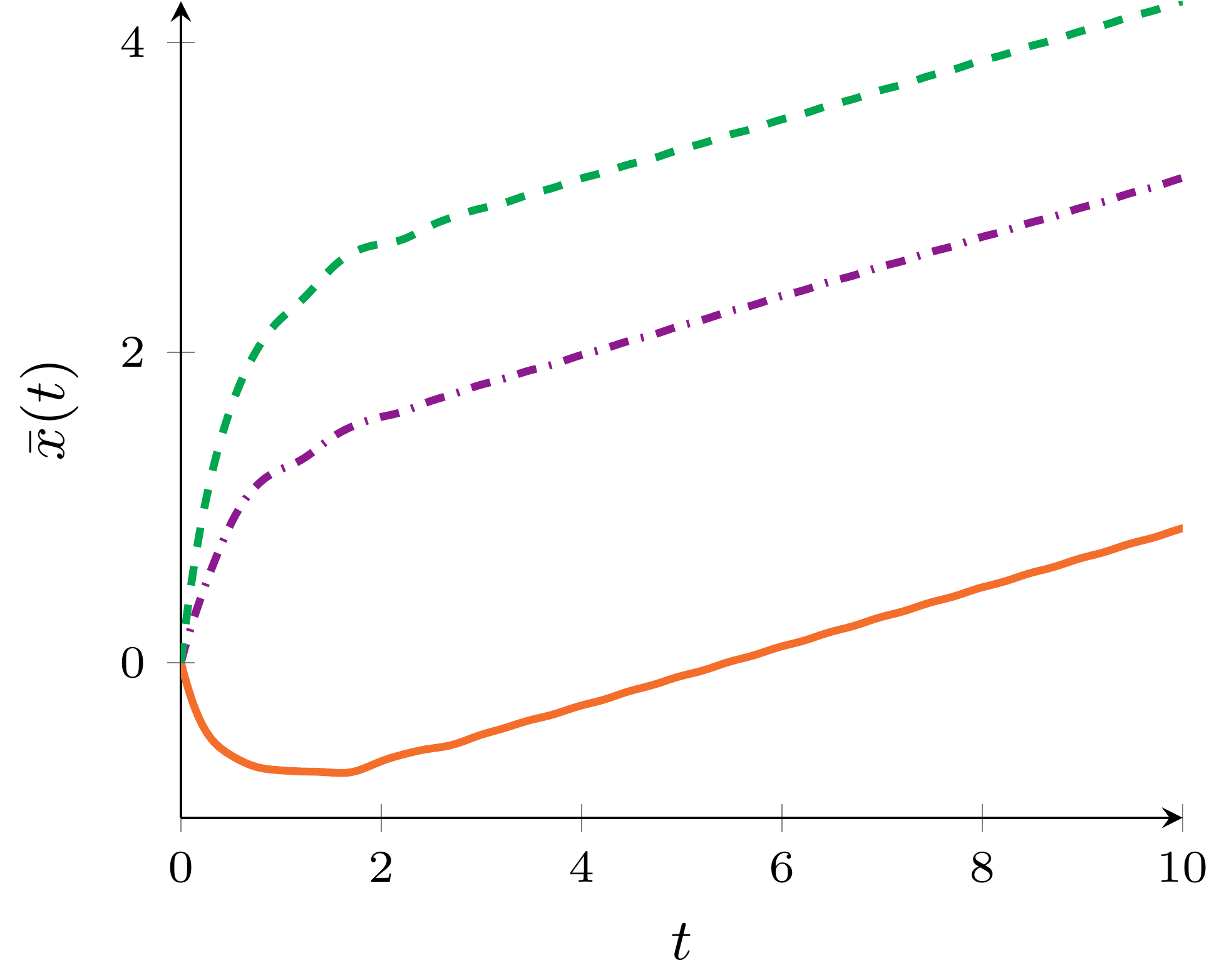}}
	\\[1cm]
	
	\subcaptionbox{Velocity $\dot{\bar x}(t)$ of the barycenter.\label{subfig:vel}}
	{\includegraphics[scale=0.2]{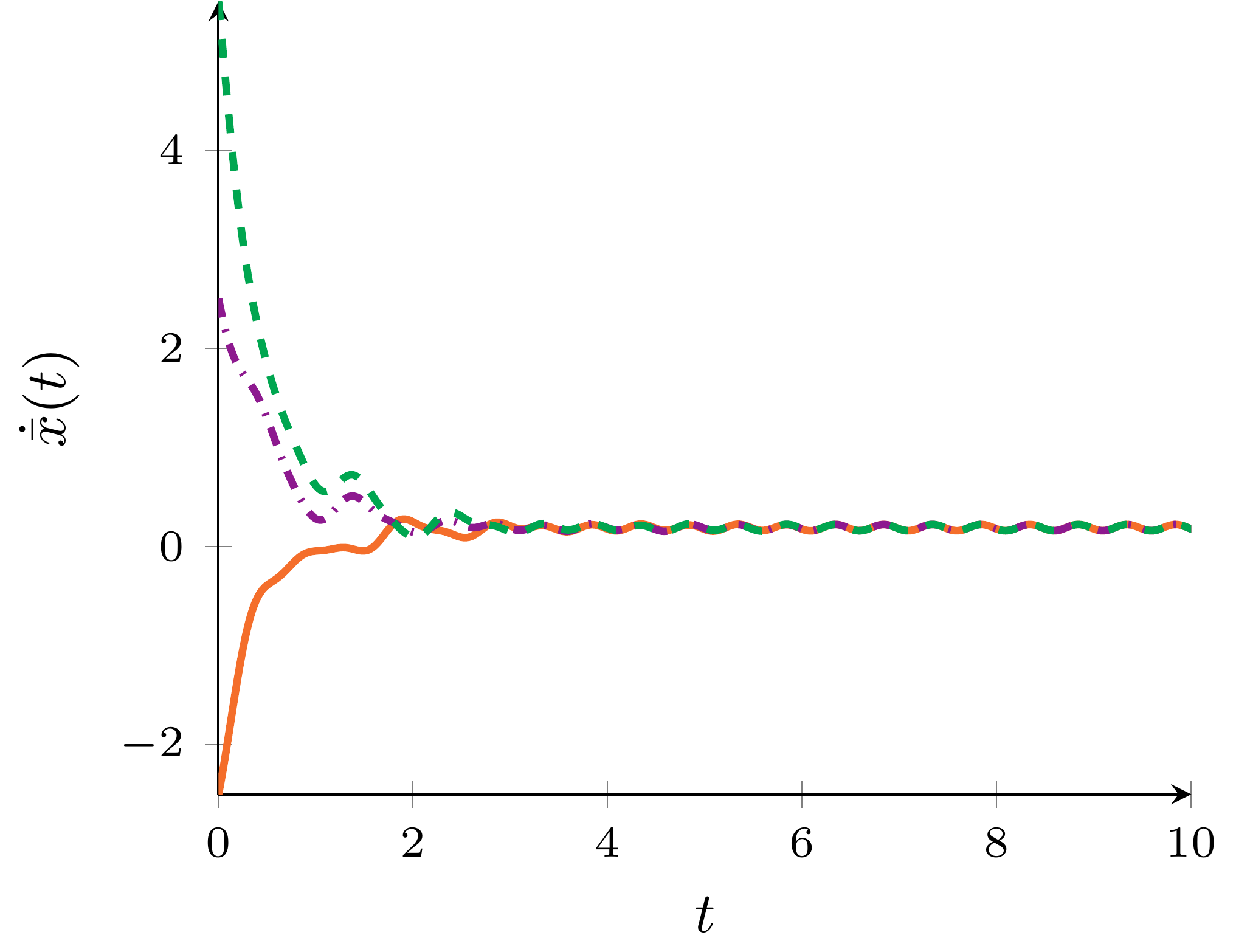}}\quad
	\subcaptionbox{Detail of the velocity $\dot{\bar x}(t)$ in figure \ref{subfig:vel}.\label{subfig:velzoom}}
	{\includegraphics[scale=0.2]{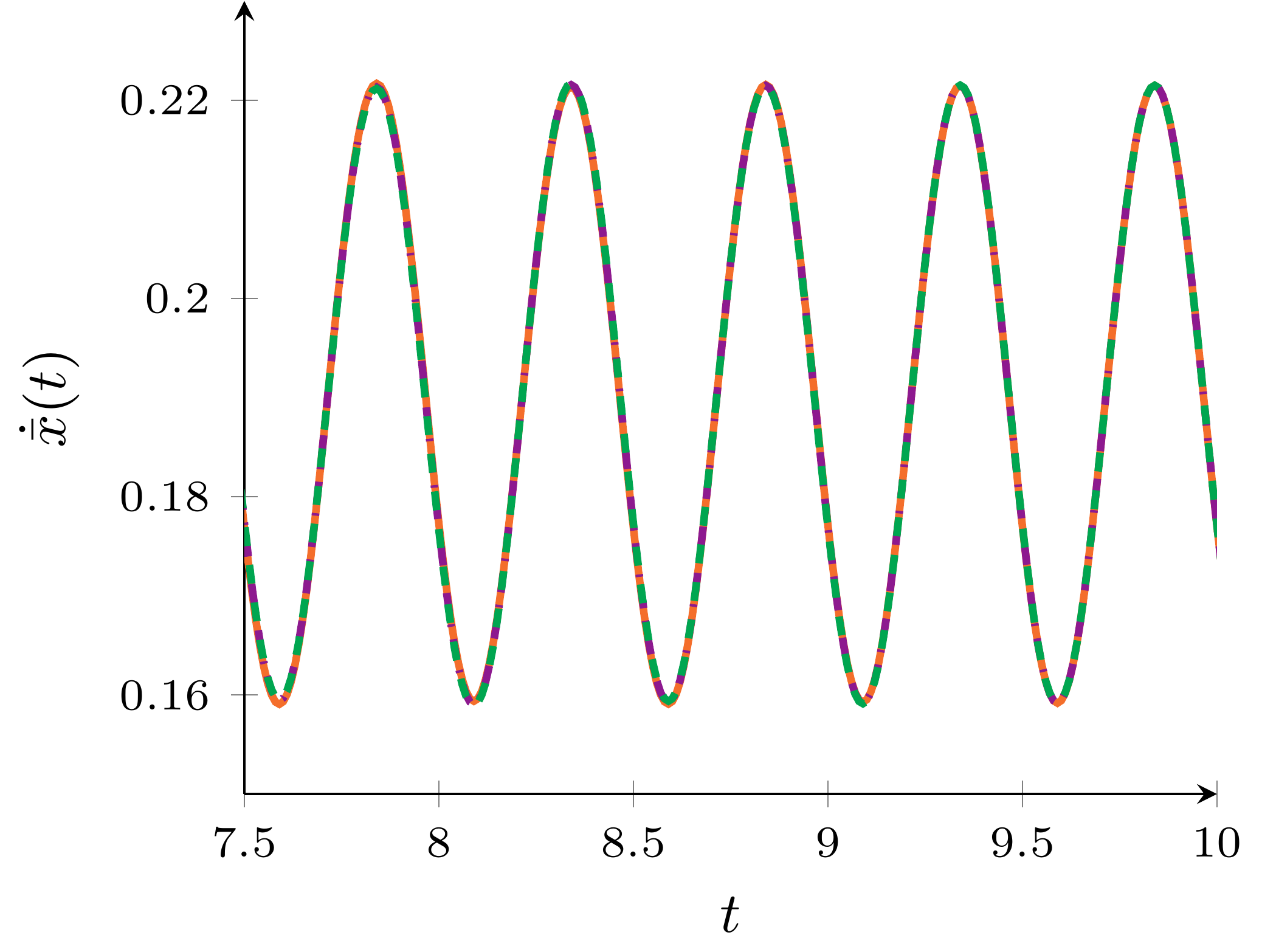}}
	\caption{Plots of the three solutions of System \eqref{eq:example} of Example~\ref{ex:linear}  with the following initial conditions: $(x_1(0),\dot{x}_1(0),  x_2(0),\dot{x}_2(0))=(\frac{1}{6},-7,-\frac{1}{6}, 2)$ for the orange, solid line,  $(x_1(0),\dot{x}_1(0),  x_2(0),\dot{x}_2(0))=(\frac{5}{2},2,-\frac{5}{2}, 3)$ for the violet, dashdotted line,  $(x_1(0),\dot{x}_1(0),  x_2(0),\dot{x}_2(0))=(\frac{3}{2},10,-\frac{3}{2}, 1)$ for the green, dashed line. }\label{fig:linear}
\end{figure}

\begin{example} \label{ex:linear}
Consider    equation \eqref{eqcrawl} with  $n=2$,  $m_1=m_2=1$, $\Ash= 1$, $\mu_1(t)=2+\sin 2\pi t,\,\mu_2(t)=2-\sin 2\pi t$, and  $L_1(t)=5\cos 2\pi t$. This leads to the system 
\begin{equation}\label{eq:example}
	\begin{cases}
		\ddot{x}_1=-(2+\sin 2\pi t)\dot{x}_1-x_1+x_2+5\cos 2\pi t\\
\ddot{x}_2=-(2-\sin 2\pi t)\dot{x}_2+x_1-x_2-5\cos 2\pi t 
	\end{cases}
\end{equation}
The simulation  of  system \eqref{eq:example} for three different initial value problems is presented in Figure~\ref{fig:linear}. We show the convergence for the shape (Fig.~\ref{subfig:shape}) and for the velocity of the barycenter (Fig.~\ref{subfig:vel}). Yet, favourable initial conditions might give a larger advancement of the position of the barycenter (Fig.~\ref{subfig:pos}): we therefore remark that all solutions converge to the same relative-periodic behaviour (i.e.~same limit in $(z,\dot x)$), but not necessarily to exactly the same relative-periodic solution, namely the difference between the position of the barycenters of two solutions might converge to a nonzero constant. 

We also observe that, while the shape converges to a $1$-periodic cycle, the velocity of the barycenter seems to converge to a $1/2$-periodic cycle (Fig.~\ref{subfig:velzoom}). This can be explained by noticing that the locomotion strategy proposed partially resembles the so-called \emph{inching} or \emph{two-anchor crawling} \cite{CaPiLa,Gid18}. During each actuation period, there are two phases when the system pushes itself forward: during elongation friction is greater on the rear block, which works partially as an anchor pushing the front block forward, while the opposite happens during contraction, with the front block anchoring and pulling forward the other one.

\end{example}


\section{Proof of Lemma~\ref{lemma:stiff_dissip}}\label{sec:stiff}

As a first step, we show that the shape $z(t)$ and its time derivative $\dot z(t)$ both enter eventually a compact set $\widetilde K$  independent of the initial values.
Applying $P$ to both sides of \eqref{eqcrawl} we get

\begin{align}
\ddot z=\begin{pmatrix}
	\ddot x_2-\ddot x_1\\
	\ddot x_3-\ddot x_2\\
	\vdots\\
	\ddot x_n -\ddot x_{n-1}
	\end{pmatrix}
&=
\frac{1}{\bar{m}}
\begin{pmatrix}
	f(t,\dot x_1)-f(t,\dot x_2)\\
f(t,\dot x_2)-f(t,\dot x_3)\\
\vdots\\
f(t,\dot x_{n-1})-f(t,\dot x_n)
\end{pmatrix}
-\frac{1}{\bar{m}}P P^\top\Ash (z-L(t)) \notag \\[2mm]
&\eqqcolon\Psi(t,\dot x)-\B (z-L(t)) \label{eq:z_second_order}
\end{align}
where we have set $\B\coloneqq \frac{1}{\bar{m}}P P^\top\Ash$.
We introduce now the  skewed energy function $\VV$ (see \cite{Hale2} for  applications of this technique to synchronization problems):
\begin{equation}
	\VV(z,\dot{z})= \frac{\norm{\dot{z}}^2}{2}+\frac{1}{2}\scal{\B z}{z}+\frac{\delta}{k} \scal{z}{\dot{z}}
\end{equation}
 where $\delta>0$ satisfies 
\begin{equation}\label{eq:delta_cond}
	0<\delta<\min\left\{\frac{2\hat \mu k}{(2\bar{m}+C_f)},\frac{k}{2},\frac{k^2}{\bar{m}}(1-\cos\frac{\pi}{n})\right\}
\end{equation}
The first upper bound  on $\delta$ will be used later, the other two provide sufficient conditions for $\VV$ to be a positive-definite quadratic form. In particular, notice  that the eigenvalues of $P P^\top$ are the values $2-2\cos\frac{j\pi}{n-1}$ for $j=1,\dots,n-1$ (see \cite[Example~7.4]{GrKa69}), so that the  smallest eigenvalue $\lambda_\mathrm{min}$ of $\B$ satisfies $\lambda_\mathrm{min}\geq 2\frac{k}{\bar{m}}(1-\cos\frac{\pi}{n-1})>0$.
 Hence, it holds
\begin{equation}\label{eq:Vcoerc}
	\lim_{\norm{(z,\theta)}\to+\infty}\VV(z,\theta)=+\infty
\end{equation} 
A computation of $\dot \VV\coloneqq\frac{\dd}{\dd t} \VV(z(t),\dot z(t))$, i.e., the derivative of $\VV$ along the orbits, gives
\begin{align*}
	\dot \VV(z,\dot z)&=\scal{\ddot z}{\dot z}+\scal{\B z}{\dot z}+\frac{\delta}{k}\norm{\dot{z}}^2+\frac{\delta}{k} \scal{z}{\ddot{z}}\\[1mm]
	&=	
\scal{\Psi(t,\dot x)-\B (z-L(t))}{\dot z}+\scal{\B z}{\dot z}+\frac{\delta}{k}\norm{\dot{z}}^2 +\frac{\delta}{k} \scal{z}{\Psi(t,\dot x)-\B (z-L(t))}	
\\[1mm]
&=\scal{\Psi(t,\dot x)}{\dot z}+\scal{\B L(t)}{\dot z} + \frac{\delta}{k}\scal{\B L(t)}{z}+\frac{\delta}{k}\norm{\dot{z}}^2+\frac{\delta}{k} \scal{z}{\Psi(t,\dot x)}-\frac{\delta}{k}\scal{\B z}{z}
\end{align*}
Let us estimate each term individually. By \ref{hyp:A4}, we have
\begin{equation*}
\scal{\Psi(t,\dot x)}{\dot z}=-\frac{1}{\bar{m}}\sum_{i=1}^{n-1}	(f(t,\dot x_{i+1})-f(t,\dot x_i))(\dot x_{i+1}-\dot x_i)\leq -\frac{\hat \mu}{\bar{m}} \sum_{i=1}^{n-1} \abs{\dot x_{i+1}-\dot x_i}^2 =-\frac{\hat \mu}{\bar{m}}\norm{\dot z}^2
\end{equation*}
The second and third terms are bounded in norm by a linear term $c_1(\norm{\dot z}+\norm{z})$ for some constant $c_1>0$.
The fourth will be controlled by the first one. For the fifth one, by Cauchy-Schwartz inequality we have
\begin{align*}
\scal{\Psi(t,\dot x)}{ z} &\leq\frac{1}{\bar{m}}\left(\sum_{i=1}^{n-1}\abs{z_i}^2\right)^\frac{1}{2} \left(\sum_{i=1}^{n-1}\abs{f(t,\dot x_{i+1})-f(t,\dot x_i)}^2\right)^\frac{1}{2}\leq\frac{\norm{z}}{\bar{m}}\left(\sum_{i=1}^{n-1}C_f^2 \abs{\dot z_i}^2\right)^\frac{1}{2}\\
&\leq \frac{C_f}{\bar{m}}\norm{z} \norm{\dot z}\leq \frac{C_f}{2\bar{m}}\left(\norm{z}^2+\norm{\dot z}^2\right)
\end{align*}
Thus, we have
\begin{equation} \label{eq:dotV}
	\dot \VV(z,\dot z)\leq \left[-\frac{\hat \mu}{\bar{m}}+\frac{\delta}{k}\left(1+\frac{C_f}{2\bar{m}}\right)\right]\norm{\dot z}^2 +\frac{\delta}{k}\left(\frac{C_f}{2\bar{m}}-\lambda_\mathrm{min}\right)\norm{z}^2 +c_1(\norm{\dot z}+ \norm{z})
\end{equation}
where we recall that $\lambda_\mathrm{min}>0$ is the smallest eigenvalue of $\B$. By assumptions \ref{hyp:stiffbody} and \eqref{eq:delta_cond} we have that the coefficients of the quadratic terms in the right-hand side of \eqref{eq:dotV} are both negative. 
Hence the derivative along the orbit in a generic point $(z,\dot z)$ satisfies
\begin{equation} \label{eq:Vsteep}
	\lim_{\norm{(z,\theta)}\to+\infty}\dot \VV(z,\theta)=-\infty
\end{equation} 

Let $K_1=\{(z,\theta)\in\R^{2(n-1)} \colon \dot \VV(z,\theta)\geq -1 \}$.  By \eqref{eq:Vsteep} we deduce that $K_1$ is bounded, and, by continuity of $\dot \VV$, compact. Let $M_1=\max_{(z,\theta)\in K_1}  \VV(z,\theta)$ and set $\widetilde K=\{(z,\theta) \colon  \VV(z,\theta)\leq M_1\}$. Since $K_1\subseteq \widetilde K$, if follows that $\widetilde{K}$ is forward invariant for the dynamics  defined by \eqref{eq:z_second_order} on $\R^{2(n-1)}$. Moreover, for any given initial value \eqref{eq:initialvalue}, setting
\begin{equation*}
	\tilde t_0\coloneqq\begin{cases}
		0 &\text{if $\VV(z_0,P v_0)\leq M_1$}\\
		\VV(z_0,P v_0)-M_1 &\text{if $\VV(z_0,P v_0)>M_1$}\\
	\end{cases}	
\end{equation*}
we observe that the solution of \eqref{eq:z_second_order} with initial conditions $z(0)=z_0$ and $\dot z(0)=P v_0$ satisfies	$(z(t),\dot z(t))\in \widetilde K$ for every $t\geq \tilde t_0$.

To conclude the proof, let us introduce the center of mass of the system $\bar x=\frac{1}{M}\sum_{i=1}^{n}m_ix_i$. As seen in Section~\ref{sec:viscous}, writing $u=\dot{\bar x}$ and $\alpha_i(t)=x_i(t)-\bar x(t)$, we have
\begin{equation} \label{eq:1card_bis}
\dot u= -\frac{1}{n\bar{m}}\sum_{i=1}^n f(t,u+\dot\alpha_i) \qquad \text{a.e. $t\in[0, +\infty)$.}
\end{equation}
Since, by \eqref{eq:baric_decomp}, the $\dot\alpha_i$ are (constant) linear combinations of the $\dot z_i$, by the boundedness of the $\dot z_i$ we know that there exists a constant $c_2>0$ such that, for every initial value problem \eqref{eq:initialvalue}, it holds $\abs{\dot{\alpha}_i(t)}<c_2$ for every $t>\tilde t_0$ and $i=1,\dots,n$.

We now prove that, for every initial value problem \eqref{eq:initialvalue}, there exists a time $t_0>\tilde t_0$, depending on $(z_0,v_0)$, such that the corresponding solution satisfies\begin{equation}\label{eq:baric_bound}
\abs{u(t)}=\abs{\dot{\bar{x}}(t)}\leq c_2+1 \qquad\text{for  every $t\geq t_0$}. 
\end{equation}
By \ref{hyp:A4} and \eqref{eq:1card_bis} we deduce that for almost every  $t>\tilde t_0$ it holds
\begin{equation}\label{eq:baric_dissip}
\frac{\dd \abs{u(t)}}{\dd t}<-\frac{\hat \mu}{\bar{m}}\qquad\text{when $\abs{u(t)}\geq c_2+1$}
\end{equation}
By \eqref{eq:baric_dissip} we deduce that if $\abs{u(t)}\leq c_2+1$, then $\abs{u(s)}\leq c_2+1$ for every $s\geq t$. Moreover, by \eqref{eq:baric_dissip} we also obtain that if $\abs{u(t)}> c_2+1$, then \begin{equation*}
\abs{u(t+\beta)}\leq c_2+1\qquad\text{where $\beta=\frac{\bar{m}(c_2+1-\abs{u(t)}) }{\hat\mu}>0$}
\end{equation*}
Thus, setting
\begin{equation*}
t_0\coloneqq\begin{cases}
\tilde t_0 &\text{if $\abs{u(\tilde t_0)}\leq c_2+1$}\\[1mm]
\tilde t_0+\displaystyle\frac{\bar{m} (c_2+1-\abs{u(\tilde t_0))} }{\hat\mu} &\text{if $\abs{u(\tilde t_0)}> c_2+1$}
\end{cases}	
\end{equation*}
it follows that \eqref{eq:baric_bound} is satisfied and
\begin{equation*}
(z(t),\dot z(t),\dot{\bar x}(t))\in \widetilde{K}\times [- c_2-1, c_2+1]\subset \R^{2n-1} \qquad\text{for every $t\geq t_0$}
\end{equation*}
Since  $\widetilde{K}\times [- c_2-1, c_2+1]$ is compact and  $(z,\dot z,\dot{\bar x})$ is a (time-independent) linear transformation of $(z,v)$ in $\R^{2n-1}$, the result follows.
\qed

\paragraph{Acknowledgements.}  Paolo Gidoni is a member of the Gruppo Nazionale di Fisica Matematica of the
Istituto Nazionale di Alta Matematica.
Alessandro Margheri  was  supported  by FCT project  UIDB/04561/2020: https://doi.org/10.54499/UIDB/04561/2020~. The Authors are grateful to Filippo Riva for a valuable discussion concerning the proof of Theorem~\ref{th:glob_existence}.

\appendix 
\section{Proof of Theorem \ref{th:glob_existence}} \label{sec:globalexistence}

\begin{proof}
Fix $t_0\in\R$ e  let  $t\to  x(t)$  be a solution of \eqref{eqcrawl}  defined on the right maximal interval $J\coloneqq [t_0, t_0+\omega).$   We argue by contradiction and assume that $\omega<+\infty$.
Taking the scalar product of \eqref{eqcrawl}  with  $\dot x(t)$ and then  integrating the result between $t_0$ and $t$ we get 
\begin{equation*}
\frac{1}{2}\norm{\dot x(t)}^2_{\mathbb{M}}+\frac{1}{2}\langle\A x(t), x(t)\rangle +\int_{t_0}^t \langle F(s,\dot x(s)), \dot x(s)\rangle\dd s=C_0+\int_{t_0}^t \langle\ell(s), \dot x(s)\rangle\dd s
\end{equation*}
for every $t\in J$, where we   define  
$C_0\coloneqq \frac{1}{2}\norm{\dot x(t_0)}_{\mathbb{M}}^2+\frac{1}{2}\langle\A x(t_0), x(t_0)\rangle$. Since by assumption \ref{hyp:Dfric}  we have $\langle F(s,\dot x(s)), \dot x(s)\rangle\geq 0$  and $\A$ is positive semi-definite, using Cauchy-Schwartz inequality and the equivalence of norms in $\R^n$,  we obtain
\begin{equation*}
\frac{1}{2}\norm{\dot x(t)}^2_{\mathbb{M}}\leq C_0+C^*\norm{\ell}_{L^\infty}\int_{t_0}^t \norm{\dot x(s)}_{\mathbb{M}}\dd s,
\end{equation*}
where $C^*>0$  is a suitable constant.  Writing $\widehat{C}\coloneqq C^*\norm{\ell}_{L^\infty}$, by  Young's inequality we then obtain  
\begin{equation*}
\frac{1}{2}\norm{\dot x(t)}^2_{\mathbb{M}}\leq C_0+\frac{\widehat{C}}{2} (t-t_0)+\widehat{C}\int_{t_0}^t \frac{1}{2}\norm{\dot x(s)}^2_{\mathbb{M}}\dd s
\end{equation*}
for $t\in J.$  Since $\alpha(t)\coloneqq C_0+\frac{\widehat{C}}{2} (t-t_0)$ is increasing in $J$, by Gronwall's Lemma (see e.g.~\cite[Theorem 1.1]{Hart})  it follows that
\begin{equation*}
	\frac{1}{2}\norm{\dot x(t)}^2_{\mathbb{M}}\leq \alpha(\omega) e^{\widehat{C}(t-t_0)}\leq \alpha(\omega)e^{\widehat{C}(\omega-t_0)}\in \R  \quad \text{for every $t\in J$.} 
\end{equation*}

Then, by the general theory of ordinary differential equations, the solution $x(t)$ can be extended beyond $\omega$, contradicting our initial assumption that $[t_0,\omega)$ is the right maximal interval of existence. Therefore, we conclude that $\omega = +\infty$.
\end{proof}

\end{document}